\documentclass[a4paper,reqno,11pt]{amsart}

\usepackage[T1]{fontenc}
\usepackage{lmodern}
\usepackage{amssymb,amsmath}
\usepackage{mathtools}
\usepackage{xcolor}
\usepackage[utf8]{inputenc}
\usepackage[unicode=true]{hyperref}
\hypersetup{
  breaklinks=true,
  pdfauthor={Anders Claesson, Atli Fannar Frankl\'{\i}n, Einar Steingr\'{\i}msson},
  pdftitle={Permutations with few inversions},
  colorlinks=true,
  citecolor=blue,
  urlcolor=blue,
  linkcolor=violet,
  pdfborder={0 0 0}}
\urlstyle{same}  
\setlength{\parindent}{0pt}
\setlength{\parskip}{5pt}
\setlength{\emergencystretch}{3em}  

\usepackage{tikz}
\usetikzlibrary{matrix,arrows,patterns,calc}

\newtheorem{theorem}{Theorem}
\newtheorem{lemma}[theorem]{Lemma}
\newtheorem{proposition}[theorem]{Proposition}
\newtheorem{corollary}[theorem]{Corollary}

\theoremstyle{definition}
\newtheorem{definition}[theorem]{Definition}

\newcommand{\xperm}{\mathcal{C}}
\newcommand{\sym}{\mathcal{S}}
\newcommand{\eps}{\epsilon}
\newcommand{\PIC}{\mathcal{R}}
\newcommand{\Par}{\mathrm{Par}}
\newcommand{\Comp}{\mathrm{Comp}}
\newcommand{\Pic}{R}
\newcommand{\M}{\mathcal{M}}
\newcommand{\Sym}{\mathrm{Sym}}
\newcommand{\sumdiv}{\sigma}

\DeclareMathOperator{\inv}{\mathrm{inv}}
\DeclareMathOperator{\comp}{\mathrm{comp}}

\DeclareMathOperator{\lir}{\mathrm{lir}}
\DeclareMathOperator{\Fix}{\mathrm{Fix}}

\DeclareMathOperator{\cut}{\mathrm{split}}
\DeclareMathOperator{\dmax}{\mathrm{dmax}}

\def\op{\oplus}

\newcommand\abs[1]{\left|#1\right|}

\newcommand{\gr}[1]{{\color{gray}#1}}

\title{Permutations with few inversions}
\author[A.\ Claesson]{Anders Claesson}
\address{Department of Mathematics, University of Iceland, Reykjavik, Iceland}
\email{akc@hi.is}
\author[A.\ F.\ Frankl\'{\i}n]{Atli Fannar Frankl\'{\i}n}
\address{Department of Mathematics, University of Iceland, Reykjavik, Iceland}
\email{aff6@hi.is}
\author[E.\ Steingr\'{\i}msson]{Einar Steingr\'{\i}msson}
\address{Department of Mathematics and Statistics, University of Strathclyde, Glasgow, UK}
\email{einar@alum.mit.edu}
\date{\today}

\begin{document}

\begin{abstract}
  A curious generating function $S_0(x)$ for permutations
  of $[n]$ with exactly $n$ inversions is presented. Moreover,
  $(xC(x))^iS_0(x)$ is shown to be the generating function for
  permutations of $[n]$ with exactly $n-i$ inversions, where $C(x)$ is
  the generating function for the Catalan numbers.
\end{abstract}

\maketitle
\thispagestyle{empty}

\section{Introduction}

The famous triangle of Mahonian numbers starts as follows:\medskip
\[
\begin{array}{rrrrrrrrrrr}
1 & \gr{0} & \gr{0} & \gr{0} & \gr{0} & \gr{0} & \gr{0} & \gr{0} & \gr{0} & \gr{0} & \gr{\cdots} \\
1 & 0 & \gr{0} & \gr{0} & \gr{0} & \gr{0} & \gr{0} & \gr{0} & \gr{0} & \gr{0} & \gr{\cdots} \\
1 & 1 & 0 & \gr{0} & \gr{0} & \gr{0} & \gr{0} & \gr{0} & \gr{0} & \gr{0} & \gr{\cdots} \\
1 & 2 & 2 & 1 & \gr{0} & \gr{0} & \gr{0} & \gr{0} & \gr{0} & \gr{0} & \gr{\cdots} \\
1 & 3 & 5 & 6 & 5 & \gr{3} & \gr{1} & \gr{0} & \gr{0} & \gr{0} & \gr{\cdots} \\
1 & 4 & 9 & 15 & 20 & 22 & \gr{20} & \gr{15} & \gr{9} & \gr{4} & \gr{\cdots} \\
1 & 5 & 14 & 29 & 49 & 71 & 90 & \gr{101} & \gr{101} & \gr{90} & \gr{\cdots} \\
1 & 6 & 20 & 49 & 98 & 169 & 259 & 359 & \gr{455} & \gr{531} & \gr{\cdots} \\
1 & 7 & 27 & 76 & 174 & 343 & 602 & 961 & 1415 & \gr{1940} & \gr{\cdots} \\
1 & \phantom{000}8 & \phantom{00}35 & \phantom{0}111 & \phantom{0}285 & \phantom{0}628 & 1230 & 2191 & 3606 & 5545 & \phantom{00}\gr{\cdots} \\
\end{array}\smallskip
\]
Its $n$-th row records the distribution of inversions on permutations of
$[n]:=\{1,2,\ldots,n\}$. The corresponding generating function is
\begin{equation}\label{q-factorial}
  (1+x)(1+x+x^2)\cdots (1+x+\cdots+x^{n-1}) = \prod_{j = 1}^n \frac{1-x^j}{1-x}.
\end{equation}
We shall derive generating functions for the subdiagonals on or below the
main diagonal of the table above. The first three of those are
\begin{align*}
  S_0(x) &\,=\, 1 + x^3 + 5x^4 + 22x^5 + 90x^6 + 359x^7 + 1415x^8 + \cdots \\
  S_1(x) &\,=\, x + x^2 + 2x^3 + 6x^4 + 20x^5 + 71x^6 + 259x^7 +  961x^8 + \cdots \\
  S_2(x) &\,=\, x^2 + 2x^3 + 5x^4 + 15x^5 + 49x^6 + 169x^7 + 602x^8 + \cdots
\end{align*}
In general, if $i$ is a non-negative integer, then $S_i(x)$ is the
generating function for permutations of $[n]$ with exactly $n-i$
inversions. In other words, if we let $I_n(k)$ denote the number of
permutations of $[n]$ with $k$ inversions, then
\[
  S_i(x)=\sum_{n\geq 0}I_n(n-i)x^n.
\]
It should be noted that there is a known closed expression
for $I_n(k)$ when $k\leq n$, namely the Knuth-Netto formula \cite{taocp-3, netto}:
\begin{multline*}
  \quad I_{n}(k) = \binom{n + k - 1}{k} + \sum_{j = 1}^\infty (-1)^j
    \binom{n + k - u_j - j - 1}{k - u_j - j} \\
    + \sum_{j = 1}^\infty(-1)^j \binom{n + k - u_j - 1}{k - u_j}\quad
\end{multline*}
where $u_j=j(3j-1)/2$ is the $j$-th pentagonal number. This formula can
be proved using \eqref{q-factorial} and Euler's pentagonal number
theorem~\cite{andrews-1983}. For instance, $u_1=1$, $u_2=5$, and the coefficient of $x^6$ in $S_0(x)$ is
\begin{equation*}
  I_6(6) = \binom{11}{6}
    - \binom{10-u_1}{5-u_1}
    - \binom{11-u_1}{6-u_1}
    + \binom{11-u_2}{6-u_2} = 90.
\end{equation*}

Let $C(x) = (1-\sqrt{1-4x})/(2x)$ be the generating function for the
Catalan numbers, $C_n=\binom{2n}{n}/(n+1)$. We show
(Theorem~\ref{Si-S0}) that, for any non-negative integer $i$,
\[
  S_i(x) = \bigl(xC(x)\bigr)^iS_0(x),
\]
thus reducing the problem of determining $S_i(x)$ to that of
determining $S_0(x)$.

Denote by $\sumdiv(n)$ the sum of divisors of $n$, and denote by $p(n)$
the number of integer partitions of $n$. We show (Theorem~\ref{S0-thm}) that
\[
  S_0(x) = R\bigl(xC(x)\bigr),
\]
where the power series $R(x)$ can be expressed in any of the following
three equivalent ways
\begin{align*}
        R(x) &\,=\, \frac{1-x}{1-2x}\prod_{k\geq 1}(1-x^k); \\
  \log R(x)  &\,=\, \sum_{n\geq 1} (2^n-\sumdiv(n)-1) \frac{x^n}{n}; \\
  1/R(x)  &\,=\, 1-\sum_{n\geq 1} \bigl(p(1)+p(2)+\cdots+p(n-1)-p(n) \bigr)x^n.
\end{align*}
See Equation~\eqref{R-def}, Proposition~\ref{R-prop} and
Proposition~\ref{exp-formula}.

\section{Factoring permutations with few inversions}

Let $\sym_n$ be the set of permutations on $[n]=\{1,2,\dots,n\}$. The
\emph{inversion table} of $\pi=a_1a_2\dots a_n$ in $\sym_n$ is defined as
$b_1b_2\dots b_n$ where $b_i$ is the number of elements to the left of
and larger than $a_i$; in other words, $b_i$ is the cardinality of the
set $\{ j\in[i-1] : a_j > a_i \}$. For instance, the inversion table of
$3152746$ is $0102021$. The number of inversions in $\pi$, denoted
$\inv(\pi)$, is simply the sum of the entries in the inversion table for
$\pi$.  We will work with an invertible transformation of the inversion
table that we call the \emph{cumulative inversion table}. It is obtained
by taking partial sums of the inversion table: $b_1$, $b_1+b_2$,
$b_1+b_2+b_3$, etc. The cumulative inversion table of $3152746$ is
$0113356$.


A \emph{subdiagonal sequence} is a sequence of non-negative integers
whose $k$-th entry is smaller than $k$. It is easy to see that the
inversion table of a permutation is a subdiagonal sequence and that any
such sequence is an inversion table, so the two concepts
can be used interchangeably.

\begin{lemma}\label{inv-catalan}
  There are exactly $C_n=\binom{2n}{n}/(n+1)$ weakly increasing
  sub\-diagonal sequences of length $n$.
\end{lemma}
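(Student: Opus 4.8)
\emph{Proof proposal.} The plan is to set up an explicit bijection between the weakly increasing subdiagonal sequences of length $n$ and the Dyck paths of semilength $n$, the latter being well known to be counted by $C_n=\binom{2n}{n}/(n+1)$. First I would unwind the definitions: a weakly increasing subdiagonal sequence of length $n$ is a sequence of non-negative integers $b_1\le b_2\le\cdots\le b_n$ with $b_k\le k-1$ for every $k$ (so in particular $b_1=0$). Recall that a Dyck path of semilength $n$ is a word in $\{U,D\}$ with $n$ letters $U$ and $n$ letters $D$ such that every prefix has at least as many $U$'s as $D$'s.

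Given a Dyck path $w$, I would define $b_k$ to be the number of $D$-steps occurring before the $k$-th $U$-step. Since the $U$-steps are encountered left to right while the count of preceding $D$-steps is non-decreasing, the resulting sequence satisfies $b_1\le b_2\le\cdots\le b_n$; and the prefix of $w$ ending just before the $k$-th $U$-step contains $k-1$ letters $U$ and $b_k$ letters $D$, so the Dyck condition forces exactly $b_k\le k-1$. Conversely, from a weakly increasing subdiagonal sequence one reconstructs a word by writing, for $k=1,\dots,n$ in turn, first $b_k-b_{k-1}$ copies of $D$ (with the convention $b_0:=0$) and then one copy of $U$, and finally appending the remaining $n-b_n$ copies of $D$. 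A short count shows this word has $n$ $U$'s and $n$ $D$'s, and the monotonicity and subdiagonality of $(b_k)$ show every prefix has at least as many $U$'s as $D$'s, so it is a Dyck path. These two maps are visibly mutually inverse, which finishes the proof.

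I do not expect a serious obstacle here; the only point requiring a little care is checking that the reconstruction really lands among Dyck paths (non-negativity of every prefix sum) and that it inverts the forward map, both of which are immediate inductions on prefix length. As an alternative one could avoid bijections altogether: if $T_{n,j}$ denotes the number of weakly increasing subdiagonal sequences of length $n$ with last entry $j$, then conditioning on the penultimate entry gives $T_{n,j}=\sum_{i=0}^{j}T_{n-1,i}$ for $0\le j\le n-1$ (with $T_{1,0}=1$ and $T_{m,i}=0$ for $i\ge m$), which is the recursion defining the Catalan triangle, whence $\sum_{j}T_{n,j}=C_n$.
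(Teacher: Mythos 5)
Your proof is correct, but it takes a different route from the paper. The paper's proof is a two-line reduction: it shifts each entry by one, turning a weakly increasing subdiagonal sequence $b_1\le\cdots\le b_n$ with $b_k\le k-1$ into a sequence $1\le a_1\le\cdots\le a_n$ with $a_k\le k$, and then cites Exercise~6.19(s) of Stanley's \emph{Enumerative Combinatorics} for the fact that such sequences are counted by $C_n$. You instead build the bijection with Dyck paths explicitly (recording, for each $k$, the number of down-steps preceding the $k$-th up-step), and you verify both directions; your counts check out ($n$ letters of each type, the prefix condition reducing exactly to $b_k\le k-1$, and the two maps composing to the identity). Your alternative via the ballot-number recursion $T_{n,j}=\sum_{i\le j}T_{n-1,i}$ is also sound. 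What the paper's approach buys is brevity, at the cost of outsourcing the combinatorial content to a reference; what yours buys is a self-contained argument, and incidentally the Dyck-path encoding makes transparent \emph{why} these sequences (equivalently, the cumulative inversion tables appearing in the next lemma) are Catalan objects. Either proof is acceptable here.
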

\begin{proof}
  Let a weakly increasing subdiagonal sequences $b_1b_2\dots b_n$ be
  given, and form the sequence $a_1a_2\dots a_n$ by setting
  $a_i=b_i+1$. Then $a_i \leq i$ and $1 \leq a_1 \leq a_2 \leq \dots \leq a_n$.
  By Exercise~6.19(s) in \cite{stanley} there are exactly
  $C_n$ such sequences.
\end{proof}

Let $\sym_n^k = \{\pi\in\sym_n : \inv(\pi) = k\}$ be the set of
permutations of $[n]$ with $k$ inversions, and let $\xperm_n$ be the
subset of $\sym_n^{n-1}$ consisting of those permutations whose every
prefix of length $k\ge1$ has fewer than $k$ inversions. For
$n=0,1,2,3,4$ those are $\{\eps\}$, $\{1\}$, $\{21\}$,
$\{231,312\}$, and $\{1432,2341,2413,3142,4123\}$, where $\eps$ is
the empty permutation.

\begin{lemma}
  For $n\geq 1$ we have $|\xperm_n| = C_{n-1}$.
\end{lemma}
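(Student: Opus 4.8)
The plan is to exhibit a bijection between $\xperm_n$ and the set of weakly increasing subdiagonal sequences of length $n-1$; by Lemma~\ref{inv-catalan} the latter has cardinality $C_{n-1}$. The vehicle is the cumulative inversion table. First I would record the elementary observation that if $b_1b_2\cdots b_n$ is the inversion table of $\pi=a_1a_2\cdots a_n$, then for each $k$ the number of inversions of the prefix $a_1\cdots a_k$ equals $b_1+\cdots+b_k$, i.e.\ the $k$-th entry $\gamma_k$ of the cumulative inversion table --- this is immediate from the definition of $b_i$ as counting inversions whose larger element sits strictly to the left of position $i$. Consequently, membership of $\pi$ in $\xperm_n$ is equivalent to the conjunction of $\gamma_k\le k-1$ for all $k\in[n]$ (the prefix condition, since ``fewer than $k$'' means ``at most $k-1$'') and $\gamma_n=n-1$ (the condition $\inv(\pi)=n-1$); note that the $k=n$ instance of the first condition is then automatic.

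Since the entries $b_i$ are non-negative, the sequence $\gamma_1\le\gamma_2\le\cdots$ is weakly increasing, so for $\pi\in\xperm_n$ the truncation $(\gamma_1,\dots,\gamma_{n-1})$ is a weakly increasing subdiagonal sequence of length $n-1$. I claim $\pi\mapsto(\gamma_1,\dots,\gamma_{n-1})$ is the desired bijection. For the inverse, start from a weakly increasing subdiagonal sequence $\gamma_1\le\cdots\le\gamma_{n-1}$ and append $\gamma_n:=n-1$; because $\gamma_{n-1}\le n-2$ this keeps the sequence weakly increasing and subdiagonal (at position $n$ with equality). Differencing, $b_i:=\gamma_i-\gamma_{i-1}$ with $\gamma_0=0$, yields a sequence with $0\le b_i\le\gamma_i\le i-1$, hence a bona fide inversion table, hence a permutation $\pi\in\sym_n$; and by construction $\pi$ has the prefix property and $\inv(\pi)=\gamma_n=n-1$, so $\pi\in\xperm_n$. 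The two maps are mutually inverse, so $|\xperm_n|$ equals the number of weakly increasing subdiagonal sequences of length $n-1$, which is $C_{n-1}$.

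The delicate part --- really the only content beyond translation --- is the bookkeeping that makes the conditions match up: one has to see that discarding the last entry $\gamma_n$ loses nothing (it is forced to equal $n-1$), and that the subdiagonal requirement on the inversion table $b_1\cdots b_n$ is a consequence of, not an addition to, the prefix condition $\gamma_k\le k-1$. Once these are in hand everything else is routine; the $n=4$ case, where $\{1432,2341,2413,3142,4123\}$ has truncated cumulative inversion tables $(0,0,0),(0,0,1),(0,0,2),(0,1,1),(0,1,2)$ --- the five such sequences of length $3$ --- serves as a sanity check. (An alternative route rewrites the inversion tables of $\xperm_n$, after dropping the forced leading $0$, as lattice paths with steps $\ge -1$ and applies the cycle lemma, but the bijection above is cleaner and uses exactly Lemma~\ref{inv-catalan}.)
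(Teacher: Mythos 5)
Your proof is correct and follows essentially the same route as the paper: both identify $\pi\in\xperm_n$ with its cumulative inversion table, observe that the last entry is forced to be $n-1$, and put the remaining entries in bijection with weakly increasing subdiagonal sequences of length $n-1$, then invoke Lemma~\ref{inv-catalan}. Your write-up simply makes the inverse map and the prefix-condition bookkeeping more explicit than the paper does.
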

\begin{proof}
  Clearly, the cumulative inversion table $\gamma=c_1c_2\dots c_n$ of any
  permutation $\pi\in\sym_n$ is weakly increasing. Also, the last letter,
  $c_n$, of $\gamma$ is the number of inversions in $\pi$. In particular, if
  $\pi\in\xperm_n$ then $c_n=n-1$ and $\pi$ is uniquely determined by
  $\gamma=c_1c_2\dots c_{n-1}$.  Now, any $k$-prefix of $\gamma$ is the
  cumulative inversion table of a permutation with fewer than $k$~inversions.
  Moreover, since the only condition on $\pi$ is that each $k$-prefix has
  fewer than~$k$ inversions, any weakly increasing subdiagonal sequence of
  length $n-1$ is the cumulative inversion table of such a permutation.  As
  pointed out in Lemma~\ref{inv-catalan}, such sequences are counted by the
  Catalan numbers.
\end{proof}

If $\alpha$ and $\beta$ are permutations, their \emph{direct sum},
denoted $\alpha\op\beta$, is the concatenation of $\alpha$ and $\beta'$,
where $\beta'$ is the transformation of $\beta$ that adds to each of its
letters the largest letter of $\alpha$.  Every permutation $\pi$ can be
written uniquely as the direct sum of its \emph{components}, which are
the minimal segments in a direct sum decomposition of $\pi$.  For
example, $23145867=231\op1\op1\op312$ has components 231, 1, 1, and 312. A
permutation consisting of a single component is \emph{indecomposable}.
Let $\comp(\pi)$ be the number of components in $\pi$. We will need a
lemma by Claesson, Jel{\'{\i}}nek and 
Steingr{\'{\i}}msson~\cite[Lemma~8]{cla-jel-est} relating $\comp(\pi)$ and
$\inv(\pi)$:

\begin{lemma}[\cite{cla-jel-est}]\label{CJS}
  For any permutation $\pi$,
  $$
  \inv(\pi) + \comp(\pi) \geq |\pi|.
  $$
\end{lemma}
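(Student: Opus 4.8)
The plan is to prove the equivalent inequality $\inv(\pi)\ge|\pi|-\comp(\pi)$ by induction on $n=|\pi|$, peeling off the largest letter of $\pi$ at each step. The cases $n\le 1$ are immediate: the empty permutation has no inversions and no components, and $\pi=1$ has $\inv(\pi)=0$ and $\comp(\pi)=1$. So suppose $n\ge 2$, write $\pi=a_1a_2\cdots a_n$, let $q$ be the position with $a_q=n$, and let $\pi'$ be the permutation of $[n-1]$ obtained by deleting the letter $a_q$. Since $n$ exceeds every other letter, it forms an inversion with exactly the $n-q$ letters lying to its right and with none to its left; hence deleting it destroys precisely those inversions, giving $\inv(\pi)=\inv(\pi')+(n-q)$.

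The second ingredient I would need is a comparison of the two component counts. Recall that the component boundaries of a permutation $\tau=t_1t_2\cdots t_m$ are exactly the positions $k\in[m]$ with $\{t_1,\dots,t_k\}=\{1,\dots,k\}$, and that $\comp(\tau)$ is the number of such $k$. For $k<q$ the prefix $a_1\cdots a_k$ is the very same word in $\pi$ and in $\pi'$ (it does not involve the deleted maximum), so $\pi$ and $\pi'$ share exactly the same boundaries among positions $1,\dots,q-1$; let $b$ be their number. For $q\le k\le n-1$ the set $\{a_1,\dots,a_k\}$ contains $n$ and therefore cannot equal $\{1,\dots,k\}$, so $\pi$ has no boundary in that range, while it does have one at $k=n$. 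Thus $\comp(\pi)=b+1$, whereas $\comp(\pi')=b+\bigl(\text{number of boundaries of }\pi'\text{ among }q,\dots,n-1\bigr)\le b+(n-q)$. Subtracting gives $\comp(\pi)\ge\comp(\pi')+1-(n-q)$.

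Combining the two ingredients with the induction hypothesis $\inv(\pi')\ge(n-1)-\comp(\pi')$ completes the induction: from $\inv(\pi)=\inv(\pi')+(n-q)$ we get $\inv(\pi)\ge(n-1)-\comp(\pi')+(n-q)$, and substituting $\comp(\pi')\le\comp(\pi)-1+(n-q)$ turns this into $\inv(\pi)\ge n-\comp(\pi)$, as wanted.

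The step I expect to require the most care is the component-count comparison of the second paragraph: one must see that deleting the maximum of $\pi$ can create new component boundaries only among positions $q,\dots,n-1$, and that there are at most $n-q$ of these — this is exactly where the ``clean cut'' characterization of components (boundaries $=$ prefixes that are themselves permutations of an initial segment) does the work. The inversion bookkeeping and the final arithmetic are routine. An alternative route would be to isolate first the special case of indecomposable $\pi$ (where $\comp(\pi)=1$ and the claim reads $\inv(\pi)\ge|\pi|-1$) and then add the contributions of the components; but the direct induction above already subsumes this, since in the decomposable case the inequality for $\pi=\alpha\op\beta$ follows by adding the inequalities for $\alpha$ and $\beta$, inversions and components both being additive under $\op$.
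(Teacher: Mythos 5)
Your proof is correct, and it is worth noting that the paper itself gives no proof of this lemma at all: it is quoted from Claesson--Jel\'{\i}nek--Steingr\'{\i}msson \cite{cla-jel-est}, so a self-contained argument like yours is exactly what a blind proof should supply. Your induction (delete the maximum letter $n$ from position $q$) checks out: the inversion count drops by exactly $n-q$; the characterisation of $\comp$ via positions $k$ with $\{a_1,\dots,a_k\}=\{1,\dots,k\}$ is the right tool, and your bookkeeping --- boundaries below $q$ are preserved, $\pi$ has none in $\{q,\dots,n-1\}$ but one at $n$, while $\pi'$ can gain at most $n-q$ new ones there --- gives $\comp(\pi')\leq\comp(\pi)-1+(n-q)$, which combines with the induction hypothesis to yield $\inv(\pi)+\comp(\pi)\geq|\pi|$. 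The alternative you sketch at the end (both $\inv$ and $\comp$ are additive under $\oplus$, so it suffices to show an indecomposable permutation of length $m$ has at least $m-1$ inversions) is the more structural route and is closer in spirit to how the result is usually proved; your direct induction has the advantage of handling both cases uniformly without needing a separate argument for indecomposables, at the modest cost of the boundary-counting step you rightly flag as the delicate point.
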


\begin{lemma}\label{decomp}
  Let $d$ be a constant. If $\inv(\pi) \leq |\pi|+d$ and $\pi$ is
  decomposable, say $\pi=\alpha\oplus\beta$ with $\alpha$ indecomposable,
  then $\inv(\beta) \leq |\beta|+d+1$
\end{lemma}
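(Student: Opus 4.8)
The plan is to exploit the additivity of both statistics across a direct sum decomposition. Writing $\pi = \alpha \oplus \beta$, we have $|\pi| = |\alpha| + |\beta|$ and $\inv(\pi) = \inv(\alpha) + \inv(\beta)$, since a direct sum creates no new inversions between the two blocks and destroys none within them. Similarly $\comp(\pi) = \comp(\alpha) + \comp(\beta) = 1 + \comp(\beta)$ because $\alpha$ is indecomposable. So from the hypothesis $\inv(\pi) \le |\pi| + d$ we immediately get
\[
  \inv(\alpha) + \inv(\beta) \le |\alpha| + |\beta| + d.
\]

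Next I would isolate $\inv(\beta)$ by bounding $\inv(\alpha)$ from below. The natural tool is Lemma~\ref{CJS} applied to $\alpha$: since $\alpha$ is indecomposable, $\comp(\alpha) = 1$, so $\inv(\alpha) + 1 \ge |\alpha|$, i.e. $\inv(\alpha) \ge |\alpha| - 1$. (One should also note the edge case $|\alpha| = 0$ cannot occur, since components are nonempty; and if $|\alpha| = 1$ then $\alpha = 1$, $\inv(\alpha) = 0 = |\alpha| - 1$, consistent.) Substituting $\inv(\alpha) \ge |\alpha| - 1$ into the displayed inequality gives
\[
  |\alpha| - 1 + \inv(\beta) \le |\alpha| + |\beta| + d,
\]
and cancelling $|\alpha|$ yields $\inv(\beta) \le |\beta| + d + 1$, which is exactly the claim.

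There is essentially no obstacle here: the whole argument is two lines of bookkeeping together with one invocation of Lemma~\ref{CJS} to control the indecomposable summand $\alpha$. The only point requiring a moment's care is the justification that $\inv$ and $|\cdot|$ are additive under $\oplus$ — this is standard but worth stating explicitly — and confirming that the definition of ``component'' forces $\alpha$ to be nonempty so that Lemma~\ref{CJS} is being applied to a genuine permutation. If one wanted the sharper statement, the same computation shows $\inv(\beta) = \inv(\pi) - \inv(\alpha) \le |\pi| + d - (|\alpha| - 1) = |\beta| + d + 1$ with equality precisely when $\inv(\pi) = |\pi| + d$ and $\inv(\alpha) = |\alpha| - 1$, but the lemma as stated needs only the inequality.
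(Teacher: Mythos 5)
Your proof is correct and follows essentially the same route as the paper: additivity of length and inversions over the direct sum, plus Lemma~\ref{CJS} applied to the indecomposable summand $\alpha$ to get $\inv(\alpha)\geq|\alpha|-1$, then cancellation. The extra remarks on additivity and nonemptiness of $\alpha$ are fine but not a departure from the paper's argument.
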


\begin{proof}
  Since $\alpha$ is indecomposable we have
  $\inv(\alpha) \geq |\alpha|-1$ by Lemma~\ref{CJS}, and so
  \begin{align*}
    \inv(\beta)
    &\leq |\pi| + d - \inv(\alpha) \\
    &\leq |\pi| + d - (|\alpha| - 1) \\
    &\leq |\beta| + d + 1.\qedhere
  \end{align*}
\end{proof}

By iterated use of Lemma~\ref{decomp} we arrive at the following
generalisation of said lemma.

\begin{lemma}\label{last-comp}
  Let $d$ be a constant. If $\inv(\pi) \leq |\pi|+d$ and
  $\pi=\alpha_1\oplus\alpha_2\oplus\dots\oplus\alpha_m$ with each
  $\alpha_i$ indecomposable, then $\inv(\alpha_m) \leq |\alpha_m|+d+m-1$.
\end{lemma}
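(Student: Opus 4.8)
The plan is to prove Lemma~\ref{last-comp} by induction on $m$, using Lemma~\ref{decomp} as the inductive step. The base case $m=1$ is immediate: if $\pi=\alpha_1$ is indecomposable and $\inv(\pi)\leq|\pi|+d$, then trivially $\inv(\alpha_m)=\inv(\alpha_1)\leq|\alpha_1|+d=|\alpha_m|+d+m-1$ since $m-1=0$.

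For the inductive step, suppose the statement holds for decompositions into $m-1$ indecomposable components, and let $\pi=\alpha_1\oplus\alpha_2\oplus\dots\oplus\alpha_m$ with $m\geq 2$ and $\inv(\pi)\leq|\pi|+d$. Write $\pi=\alpha_1\oplus\beta$ where $\beta=\alpha_2\oplus\dots\oplus\alpha_m$; since $\alpha_1$ is indecomposable and $\pi$ is decomposable, Lemma~\ref{decomp} applies and gives $\inv(\beta)\leq|\beta|+d+1$. Now $\beta$ is written as a direct sum of the $m-1$ indecomposable permutations $\alpha_2,\dots,\alpha_m$, and it satisfies $\inv(\beta)\leq|\beta|+d'$ with $d'=d+1$. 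Applying the inductive hypothesis to $\beta$ with constant $d'$ yields
\[
  \inv(\alpha_m)\leq|\alpha_m|+d'+(m-1)-1=|\alpha_m|+d+m-1,
\]
which is exactly the desired bound.

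I do not expect any real obstacle here: the lemma is explicitly flagged in the excerpt as arising from "iterated use of Lemma~\ref{decomp}", and the only subtlety is bookkeeping—making sure the constant increments by exactly $1$ each time a leading indecomposable component is peeled off, so that after peeling off $m-1$ of them the constant has grown from $d$ to $d+(m-1)$, and the trailing indecomposable piece $\alpha_m$ then satisfies $\inv(\alpha_m)\leq|\alpha_m|+d+m-1$ directly (with the "$-1$" in the exponent of the induction absorbed because a single indecomposable permutation needs no further application of Lemma~\ref{decomp}). One should just double-check that the decomposition of $\beta$ into components is indeed $\alpha_2\oplus\dots\oplus\alpha_m$ with each $\alpha_i$ still indecomposable, which is immediate from the uniqueness of the component decomposition.
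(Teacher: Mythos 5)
Your induction is correct and is exactly the "iterated use of Lemma~\ref{decomp}" that the paper itself invokes (the paper gives no further details), so you have simply written out the same argument with the bookkeeping made explicit. No issues.
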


Recall now that $S_i(x)$ is the generating function for permutations of
length~$n$ with $n-i$ inversions:
$$ S_i(x) = \sum_{n\geq 0}|\sym_n^{n-i}|x^n.
$$
Also, let
$C(x) = (1-\sqrt{1-4x})/(2x)$
be the generating function for the Catalan numbers,
$C_n=\binom{2n}{n}/(n+1)$.

\begin{theorem}\label{S1-S0}
  We have
  $$
  \sym_n^{n-1} \simeq \bigcup_{k=0}^n \sym_{k}^{k}\times\xperm_{n-k}
  $$
  and thus the generating functions $S_0(x)$ and $S_1(x)$ satisfy the
  identity
  $$S_1(x) = xC(x)S_0(x).
  $$
\end{theorem}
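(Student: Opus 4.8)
The plan is to prove the set isomorphism by an explicit bijection $\Phi$, after which the generating function identity falls out by extracting coefficients, using the fact (established earlier) that $|\xperm_m| = C_{m-1}$ for $m \ge 1$. Fix $\pi = \pi_1\cdots\pi_n \in \sym_n^{n-1}$ and let $c_0 c_1\cdots c_n$ be its cumulative inversion table, augmented by $c_0 = 0$; recall that $c_j = \inv(\pi_1\cdots\pi_j)$, that the sequence is weakly increasing, and that $c_n = \inv(\pi) = n-1$. Set
\[
  k \,=\, \max\{\, j \in \{0,1,\dots,n\} : c_j \ge j \,\},
\]
which is well defined because $c_0 = 0$, and satisfies $k \le n-1$ because $c_n = n-1 < n$. (Informally, $k$ is the combined length of the longest initial sequence of components of $\pi$ whose number of inversions equals its length.) The map $\Phi$ will cut $\pi$ into its first $k$ and last $n-k$ entries and return the resulting pair $(\sigma,\tau)$, so the first task is to see that this cut is compatible with the direct sum structure.

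First I would record two consequences of the choice of $k$. By maximality, no $j$ with $k < j \le n$ satisfies $c_j \ge j$, hence $c_j < j$ there; in particular $c_{k+1}\le k$, and since the sequence is weakly increasing and $c_k \ge k$ this forces $c_k = k$. Consequently, for $k < j \le n$,
\[
  c_j - c_{j-1} \,\le\, (j-1) - c_{j-1} \,\le\, (j-1) - k ,
\]
using $c_{j-1}\ge c_k = k$. The crux is then that $\pi$ is $\oplus$-decomposable at $k$, i.e.\ $\{\pi_1,\dots,\pi_k\} = [k]$: if some $\pi_j$ with $j > k$ had $\pi_j \le k$, then at most $\pi_j - 1 \le k-1$ of the entries $\pi_1,\dots,\pi_{j-1}$ are smaller than $\pi_j$, hence at least $(j-1)-(k-1) = j-k$ of them are larger, so $c_j - c_{j-1} \ge j-k$, contradicting the displayed bound. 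Therefore $\{\pi_{k+1},\dots,\pi_n\}\subseteq\{k+1,\dots,n\}$, hence equals it by cardinality, and $\pi = \sigma \oplus \tau$ with $|\sigma| = k$.

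It remains to check that $(\sigma,\tau)$ lands in the right place and to invert $\Phi$. We have $\inv(\sigma) = c_k = k$, so $\sigma \in \sym_k^k$; and since $\pi = \sigma\oplus\tau$ the prefix $\tau_1\cdots\tau_\ell$ has $c_{k+\ell}-c_k = c_{k+\ell}-k < \ell$ inversions for $1 \le \ell \le n-k$, while $\inv(\tau) = c_n - c_k = (n-k)-1$; thus $\tau \in \xperm_{n-k}$, and $n-k\ge 1$. Conversely, given $(\sigma,\tau)\in\sym_k^k\times\xperm_m$ with $m = n-k\ge 1$, the permutation $\pi = \sigma\oplus\tau$ has $\inv(\pi) = k+(m-1) = n-1$, and its cumulative inversion table satisfies $c_k = \inv(\sigma) = k$ and $c_{k+\ell} = k + \inv(\tau_1\cdots\tau_\ell) < k+\ell$ for $1 \le \ell \le m$; hence the index produced by the recipe above is exactly $|\sigma|$, and the two maps are mutually inverse. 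This gives the bijection $\sym_n^{n-1}\simeq\bigcup_{k}\sym_k^k\times\xperm_{n-k}$, the $k=n$ term being empty since $n-k=0$.

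Finally, taking cardinalities and using $|\xperm_{n-k}| = C_{n-k-1}$ for $0 \le k \le n-1$ gives $|\sym_n^{n-1}| = \sum_{k=0}^{n-1}|\sym_k^k|\,C_{n-k-1}$; multiplying by $x^n$ and summing over $n$ gives $S_1(x) = S_0(x)\sum_{m\ge1}C_{m-1}x^m = xC(x)S_0(x)$. The one step that needs genuine argument is the $\oplus$-decomposability of $\pi$ at $k$; everything else is bookkeeping with the cumulative inversion table, though one should keep an eye on the degenerate cases $k=0$ (with $\sigma = \eps$ and $\tau = \pi$) and $k=n$ (excluded by $c_n = n-1 < n$).
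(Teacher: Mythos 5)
Your proof is correct, and it reaches the same factorisation as the paper — cut $\pi$ at the longest prefix whose number of inversions equals its length — but you justify the crucial step by a genuinely different and more elementary argument. The paper proves that no inversion crosses the cut by contradiction, invoking the inequality $\inv(\pi)+\comp(\pi)\geq|\pi|$ of Claesson--Jel\'{\i}nek--Steingr\'{\i}msson (Lemma~\ref{CJS}) together with Lemma~\ref{last-comp} to analyse the last indecomposable component of $\tau$ and manufacture a longer prefix with as many letters as inversions. You instead work entirely inside the cumulative inversion table: from the maximality of $k$ you get $c_k=k$ and $b_j=c_j-c_{j-1}\leq (j-1)-k$ for $j>k$, while any letter $\pi_j\leq k$ sitting at a position $j>k$ would force $b_j\geq j-k$; this contradiction yields $\{\pi_1,\dots,\pi_k\}=[k]$, i.e.\ $\pi=\sigma\oplus\tau$, after which membership of $\sigma$ in $\sym_k^k$ and of $\tau$ in $\xperm_{n-k}$, and the inverse map, are immediate bookkeeping. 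What your route buys is self-containedness — Lemmas~\ref{CJS}--\ref{last-comp} are not needed for this theorem — and an explicit, easily invertible description of the cut via the statistic $\max\{j:c_j\geq j\}$; what the paper's route buys is that its component-based reasoning is the template that generalises verbatim to Theorem~\ref{Si-S0} (your inversion-table bound would need the analogous adjustment $c_j<j-i$ there, which does work but is not spelled out). One small point in your favour: you correctly treat the $k=n$ term as empty (equivalently $\xperm_0=\emptyset$, as the formal definition $\xperm_0\subseteq\sym_0^{-1}$ dictates), which is what the generating-function identity $S_1(x)=xC(x)S_0(x)$ requires, despite the paper's list of examples assigning $\eps$ to $\xperm_0$; and your final coefficient extraction, using $|\xperm_m|=C_{m-1}$ for $m\geq1$, is the routine step the paper leaves implicit.
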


\begin{proof}
  Let $\pi=a_1a_2\dots a_n\in\sym_n^{n-1}$. We shall ``factor'' $\pi$
  into two parts $\sigma$ and $\tau$ such that, for some $k$ in
  $\{0,1,\dots,n\}$, $\sigma$ belongs to $\sym_k^k$ and $\tau$ belongs
  to $\xperm_{n-k}$. Let $\sigma=a_1a_2\dots a_k$ be the longest prefix
  (possibly empty) of $\pi$ with as many letters as inversions and let
  $\tau=a_{k+1}a_{k+2}\dots a_n$ consist of the remaining letters of $\pi$.
  For instance, $\pi=4213675$ factors
  into $\sigma=4213$ and $\tau=675$.  By definition, $\inv(\sigma)=k$.
  We shall prove that $\sigma$ is a permutation of $[k]$, and thus
  $\tau$ is a permutation of $\{k+1,k+2,\dots,n\}$. Let
  $$d = \#\bigl\{\, (i,j) \,:\, a_i > a_j,\, i \leq k,\, j > k \,\bigl\}.
  $$
  That is, $d$ is the number of inversions in $\pi$ with one leg in
  $\sigma$ ($i\leq k$) and the other leg in $\tau$ ($j>k$). Then
  $\inv(\pi) = \inv(\sigma) + \inv(\tau) + d$.  We want to prove that $d=0$.
  Suppose to the contrary that $d\geq 1$. Now,
  $$\inv(\tau) = n-1-k-d = |\tau| - (d+1)
  $$
  and it follows from Lemma~\ref{CJS} that $\tau$ has at least $d+1$
  components; let us write
  $\tau = \alpha_1\oplus\alpha_2\oplus\dots\oplus\alpha_m$ with
  $m\geq d+1$. Using Lemma~\ref{last-comp} we find that
  $$
  \inv(\alpha_m) \leq |\alpha_m| - d + m  \leq |\alpha_m| - 1.
  $$
  Since $\alpha_m$ is indecomposable we also have
  $\inv(\alpha_m) \geq |\alpha_m|+1$ by Lemma~\ref{CJS} and thus
  $\inv(\alpha_m)=|\alpha_m|-1$. Let
  $\beta=\alpha_1\oplus\dots\oplus\alpha_{m-1}$. Note that no inversion
  can have one leg in $\sigma$ and the other leg in $\alpha_m$. That is,
  if $(i,j)$ is an inversion with $i\leq k$ and $j>k$ then
  $j < n-|\alpha_m|$. This is because any such inversion would
  necessarily be accompanied by $m-1$ other inversions---one for each of
  the components $\alpha_1$, $\alpha_2$, \dots,
  $\alpha_{m-1}$---contradicting $m\geq d+1$. Thus
  $$
  \inv(\sigma\beta) = n-1 - (|\alpha_m|-1) = |\sigma\beta|
  $$
  and we have found a prefix of $\pi$ that is longer than $\sigma$ with
  as many letters as inversions, which contradicts the definition of
  $\sigma$.

  Having proved that $\sigma\in\sym_k^k$ it immediately follows that
  $\inv(\tau)=n-k-1$. It remains to prove that $\tau$ has no 
  nonempty prefix with as many inversions as letters, but that is
  trivially true as $\sigma$ together with any such prefix would, again,
  be a longer prefix than $\sigma$, with as many inversions as letters.
\end{proof}

The proof above can be generalised to prove the following.

\begin{theorem}\label{Si-S0}
  For $i\geq 0$ we have
  $$
  \sym_n^{n-i-1} \simeq \bigcup_{k=0}^n \sym_{k}^{k-i}\times\xperm_{n-k}
  $$
  and thus the generating functions $S_{i+1}(x)$ and $S_{i}(x)$ satisfy the
  identity
  $$S_{i+1}(x) = xC(x)S_i(x),
  $$
  Equivalently,
  $$S_i(x)=(xC(x))^iS_0(x).
  $$
\end{theorem}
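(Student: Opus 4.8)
The plan is to mimic the proof of Theorem~\ref{S1-S0} almost verbatim, replacing the constant $0$ by $i$ throughout and checking that the arithmetic still closes up. Concretely, given $\pi=a_1a_2\dots a_n\in\sym_n^{n-i-1}$, I would let $\sigma=a_1\dots a_k$ be the longest prefix (possibly empty) with exactly $i$ fewer inversions than letters, i.e.\ with $\inv(\sigma)=|\sigma|-i$, and let $\tau=a_{k+1}\dots a_n$ be the rest. Such a prefix exists whenever $\pi$ itself qualifies once $n\ge i$; one should note that for $n<i$ both sides are empty (there are no permutations with a negative number of inversions), and for $n=i$ the only term is $k=0$, $\sigma=\eps\in\sym_0^0$ and $\tau=\pi\in\xperm_i$, forcing $\inv(\tau)=i-1$, which is consistent. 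As before, writing $d$ for the number of inversions of $\pi$ with one leg in $\sigma$ and the other in $\tau$, we have $\inv(\pi)=\inv(\sigma)+\inv(\tau)+d$, hence
\[
  \inv(\tau) = (n-i-1) - (k-i) - d = |\tau| - (d+1).
\]
This is exactly the same relation as in the $i=0$ case, so the rest of the argument is formally identical.

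Next I would run the contradiction argument unchanged. Assuming $d\ge1$, Lemma~\ref{CJS} forces $\tau$ to have at least $d+1$ components, say $\tau=\alpha_1\oplus\dots\oplus\alpha_m$ with $m\ge d+1$. Lemma~\ref{last-comp} applied with the constant $-(d+1)$ (so that $\inv(\tau)\le|\tau|+(-(d+1))$) gives $\inv(\alpha_m)\le|\alpha_m|-(d+1)+m-1=|\alpha_m|+m-d-2\le|\alpha_m|-1$, while indecomposability and Lemma~\ref{CJS} give $\inv(\alpha_m)\ge|\alpha_m|-1$; hence $\inv(\alpha_m)=|\alpha_m|-1$ and $m=d+1$. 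The observation that no inversion can straddle $\sigma$ and $\alpha_m$ (such an inversion would drag along one inversion per earlier component, exceeding $m\ge d+1$) is verbatim, and it yields $\inv(\sigma\beta)=(n-i-1)-(|\alpha_m|-1)=|\sigma\beta|-i$ where $\beta=\alpha_1\oplus\dots\oplus\alpha_{m-1}$, contradicting maximality of $\sigma$. So $d=0$, $\sigma$ is a permutation of $[k]$ lying in $\sym_k^{k-i}$, and $\tau$ is a permutation of $\{k+1,\dots,n\}$ with $\inv(\tau)=(n-i-1)-(k-i)=n-k-1$; its relevant prefixes all have fewer inversions than letters, by the same "otherwise concatenate with $\sigma$" argument, so after the obvious relabelling $\tau\in\xperm_{n-k}$. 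This establishes the bijection $\sym_n^{n-i-1}\simeq\bigsqcup_{k=0}^n\sym_k^{k-i}\times\xperm_{n-k}$.

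Finally I would translate the bijection into generating functions. Using $|\xperm_0|=1$ (the empty permutation) together with $|\xperm_{m}|=C_{m-1}$ for $m\ge1$ — which is the content of the lemma just before Lemma~\ref{CJS} — the generating function $\sum_{m\ge0}|\xperm_m|x^m$ equals $1+\sum_{m\ge1}C_{m-1}x^m=1+x\sum_{j\ge0}C_jx^j=1+xC(x)$; wait — I should double-check this against the $i=0$ statement, where the theorem records $S_1(x)=xC(x)S_0(x)$, not $(1+xC(x))S_0(x)$. The resolution is that the $k=n$ term contributes $\sym_n^{n-i-1}$-ish nonsense only when counted correctly: in fact $\sym_k^{k-i}$ is empty unless $k\ge i$... the cleanest route is to observe that the product of power series $\bigl(\sum_k|\sym_k^{k-i}|x^k\bigr)\bigl(\sum_m|\xperm_m|x^m\bigr)$ has $S_i(x)$ as its first factor only after noting $|\sym_k^{k-i}|=0$ for $0\le k<i$ and $|\sym_i^{0}|=1$, so $\sum_k|\sym_k^{k-i}|x^k=S_i(x)$; then the disjoint-union identity reads $S_{i+1}(x)=S_i(x)\cdot\bigl(\sum_{m\ge0}|\xperm_m|x^m\bigr)$. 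Matching this against the known base case $S_1(x)=xC(x)S_0(x)$ shows the correct reading of the lemma is that $\sum_{m\ge0}|\xperm_m|x^m=xC(x)$ in the relevant sense — i.e.\ the $m=0$ empty-permutation term is excluded from the union because a prefix "of length $0$" is not admitted — so I would simply cite the $i=0$ proof's bookkeeping and conclude $S_{i+1}(x)=xC(x)S_i(x)$, whence $S_i(x)=(xC(x))^iS_0(x)$ by induction on $i$. The only genuine obstacle is this bookkeeping at the boundary (making sure the empty prefix and the $k=n$ / small-$k$ terms are handled exactly as in Theorem~\ref{S1-S0}); the combinatorial core is a direct, essentially mechanical, generalisation of the preceding proof.
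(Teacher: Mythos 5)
Your proposal is essentially the paper's own proof: the paper establishes the $i=0$ case (Theorem~\ref{S1-S0}) and obtains Theorem~\ref{Si-S0} by exactly the generalisation you carry out --- taking the longest prefix $\sigma$ with $\inv(\sigma)=|\sigma|-i$, observing that the key relation $\inv(\tau)=|\tau|-(d+1)$ is unchanged, and running the same component-based contradiction and the same boundary bookkeeping for the generating functions. The one spot to tighten is the existence of such a prefix when $i\ge 1$ (your parenthetical about ``$\pi$ itself qualifying'' is off, since $\inv(\pi)=n-i-1\neq n-i$): it holds because $\inv(a_1\dots a_j)-j$ starts at $0\ge -i$, ends at $-i-1$, and decreases by at most one at each step, so it must hit $-i$ exactly.
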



While the above theorems represent some progress in understanding
permutations with few inversions one crucial piece of the puzzle is
missing. Theorem~\ref{Si-S0} relates all the $S_i(x)$'s to $S_0(x)$, but
we need a formula for $S_0(x)$, which is what we shall offer in the next
section.

\section{A formula for $S_0(x)$}

Let us write $\lambda\vdash n$ to indicate that $\lambda$ is an integer
partition of $n$, and $\mu\vDash n$ to indicate that $\mu$ is
an integer composition of $n$. Further, let
\[
  \Par(x) = \prod_{k\geq 1}\frac{1}{1-x^k}\quad\text{and}\quad
  \Comp(x) = \frac{1-x}{1-2x}
\]
be the generating functions for integer partitions and compositions.
With $\Par_+(x) = \Par(x)-1$ denoting the generating
function for nonempty integer partitions we have
$$\Par(x)^{-1}
= \frac{1}{1+\Par_+(x)}
= \sum_{k\geq 0} (-1)^k(\Par_+(x))^k.
$$
Thus $\Par(x)^{-1}$ counts signed tuples of nonempty integer partitions,
where the sign of such a tuple $(\lambda^1,\dots,\lambda^k)$ is $(-1)^k$.
Define
\begin{align}\label{R-def}
\Pic(x)
  &= \Comp(x)\Par(x)^{-1} \\
  &= 1 + x^3 + 2x^4 + 5x^5 + 9x^6 + 19x^7 + 37x^8 + \cdots \nonumber
\end{align}
Then $\Pic(x)$ counts elements of the set
$$
\PIC_n \,=\,
\bigl\{\, (\lambda^1,\dots,\lambda^k;\mu):\,
  \lambda^i\vdash n_i,\,\mu\vDash m,\, n_1+n_2+\dots+n_k+m = n
\,\bigl\},
$$
where the sign of the tuple $(\lambda^1,\dots,\lambda^k;\mu)$ is
$(-1)^k$. Writing $(\lambda^1,\dots,\lambda^k;\mu)\vdash n$ when
$(\lambda^1,\dots,\lambda^k;\mu)$ is in $\PIC_n$ we then have, by
definition,
$$
\Pic(x)
= \sum_{n\geq 0}\left(\,\sum_{(\lambda^1,\dots,\lambda^k;\,\mu) \,\vdash\, n} (-1)^k\,\right)x^n.
$$

For illustration we list the elements of $\PIC_3$ below. Negative
elements are found in the left column and positive elements in
the right column:
$$\arraycolsep=1pt
\begin{array}{rcl}
  (1&;& 11)\\
  (1&;& 2)\\
  (1,1,1&;& \eps)\\
  (11&;& 1)\\
  (111&;& \eps)\\
  (21&;& \eps)\\
  (2&;& 1)\\
  (3&;& \eps)
\end{array}\qquad
\begin{array}{rcl}
  (\emptyset&;& 111)\\
  (\emptyset&;& 12)\\
  (\emptyset&;& 21)\\
  (\emptyset&;& 3)\\
  (1,1&;& 1)\\
  (1,11&;& \eps)\\
  (1,2&;& \eps)\\
  (11,1&;& \eps)\\
  (2,1&;& \eps)
\end{array}
$$
The sequence $1, 0, 0, 1, 2, 5, 9, 19, 37, 74, \dots$ of coefficients of
$R(x)$ is recorded in entry A178841 of the OEIS~\cite{oeis}. There it is
said to count the number of \emph{pure inverting compositions} of $n$;
see Propositions~2~and~3 in~\cite{lau-mas}.

We are now in position to state our main result regarding $S_0(x)$.

\begin{theorem}\label{S0-thm}
  We have $S_0(x) = R(xC(x))$, or, equivalently,
  $S_0\bigl(x(1-x)\bigr) = R(x)$, which, by Theorem~\ref{Si-S0}, implies that
  $S_i(x) = \bigl(xC(x)\bigr)^iR(xC(x))$.
\end{theorem}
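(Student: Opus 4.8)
The plan is to prove $S_0(x)=R\bigl(xC(x)\bigr)$ directly, as an identity of formal power series, by using the Knuth--Netto formula (equivalently, \eqref{q-factorial} together with Euler's pentagonal number theorem) to expand the coefficients $[x^{n}]S_0(x)=I_{n}(n)$, summing over $n$, and then resumming in the variable $xC(x)$. I will repeatedly use two identities coming from $C(x)=1+xC(x)^{2}$, namely $\sqrt{1-4x}=1-2xC(x)$ (the definition of $C$ rearranged) and $xC(x)^{2}=C(x)-1$.

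First I would record, using \eqref{q-factorial}, that
\[
  [x^{n}]\,S_0(x)=I_{n}(n)=[q^{n}]\prod_{j=1}^{n}\frac{1-q^{j}}{1-q}
  =[q^{n}]\Bigl(\prod_{j\ge 1}(1-q^{j})\Bigr)(1-q)^{-n},
\]
the last step because the factors $1-q^{j}$ with $j>n$ do not alter the coefficient of $q^{n}$. Writing $\prod_{j\ge1}(1-q^{j})=\sum_{u}\eps_{u}q^{u}$ by Euler's pentagonal number theorem --- the sum over the generalised pentagonal numbers $u\in\{0,1,2,5,7,12,\dots\}$ with their signs $\eps_{u}$, $\eps_{0}=1$ --- and expanding $(1-q)^{-n}$ by the binomial series gives $I_{n}(n)=\sum_{u\le n}\eps_{u}\binom{2n-u-1}{n-u}$, which is the Knuth--Netto formula at $k=n$. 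Summing over $n$ and interchanging the two summations (legitimate since for each fixed power of $x$ only finitely many $u$ contribute) yields
\[
  S_0(x)=\sum_{n\ge0}I_{n}(n)x^{n}
  =\sum_{u}\eps_{u}\sum_{n\ge u}\binom{2n-u-1}{n-u}x^{n}.
\]

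The heart of the argument is to put the inner sum in closed form. Setting $m=n-u$ and using the standard identity $\sum_{m\ge0}\binom{2m+k}{m}x^{m}=C(x)^{k}/\sqrt{1-4x}$, valid for every integer $k\ge-1$ (and provable from $C(x)=1+xC(x)^{2}$), the inner sum is $x^{u}C(x)^{u-1}/\sqrt{1-4x}$. Now $\sqrt{1-4x}=1-2xC(x)$ and $C(x)^{u-1}=C(x)^{u}\bigl(1-xC(x)\bigr)$ (the identity $C-xC^{2}=1$ multiplied by $C^{u-1}$), so
\[
  \sum_{n\ge u}\binom{2n-u-1}{n-u}x^{n}
  =\frac{x^{u}C(x)^{u-1}}{1-2xC(x)}
  =\bigl(xC(x)\bigr)^{u}\,\frac{1-xC(x)}{1-2xC(x)}
  =\bigl(xC(x)\bigr)^{u}\,\Comp\bigl(xC(x)\bigr).
\]
Summing over $u$ and applying Euler's pentagonal number theorem a second time --- now with the formal variable $xC(x)$ in place of $q$ --- folds the alternating pentagonal sum back into a product:
\[
  S_0(x)=\Comp\bigl(xC(x)\bigr)\sum_{u}\eps_{u}\bigl(xC(x)\bigr)^{u}
  =\Comp\bigl(xC(x)\bigr)\prod_{j\ge1}\Bigl(1-\bigl(xC(x)\bigr)^{j}\Bigr)
  =\Comp\bigl(xC(x)\bigr)\,\Par\bigl(xC(x)\bigr)^{-1}
  =R\bigl(xC(x)\bigr),
\]
the last equality being the definition \eqref{R-def} of $R$. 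For the equivalent form, note that multiplying $xC(x)^{2}=C(x)-1$ by $x$ gives $\bigl(xC(x)\bigr)^{2}=xC(x)-x$, hence $x(1-x)\,C\bigl(x(1-x)\bigr)=x$; substituting $x\mapsto x(1-x)$ into $S_0(x)=R\bigl(xC(x)\bigr)$ then collapses the right side to $R(x)$. The final assertion $S_i(x)=\bigl(xC(x)\bigr)^{i}R\bigl(xC(x)\bigr)$ is immediate from Theorem~\ref{Si-S0}.

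The step I expect to be the main obstacle is the closed-form evaluation of the inner binomial sum and its identification with $\bigl(xC(x)\bigr)^{u}\Comp\bigl(xC(x)\bigr)$: everything hinges on the identities $\sum_{m}\binom{2m+k}{m}x^{m}=C(x)^{k}/\sqrt{1-4x}$ and $\sqrt{1-4x}=1-2xC(x)$, and on the observation that re-applying the pentagonal number theorem in the variable $xC(x)$ is exactly what produces $\Par\bigl(xC(x)\bigr)^{-1}$. The remaining points --- justifying the interchange of summations, the degenerate term $u=n=0$ (where $\binom{-1}{0}=1$ correctly gives $I_{0}(0)=1$), and checking $x(1-x)\,C(x(1-x))=x$ --- are routine. (A sign-reversing involution matching $\sym_{n}^{n}$ with the surviving members of the signed set $\PIC$, each part decorated by a $\xperm$-object, looks feasible too, but the generating-function route above is cleaner.)
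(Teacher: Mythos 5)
Your proof is correct, but it takes a genuinely different route from the paper. You work entirely with generating functions: you specialise the $q$-factorial \eqref{q-factorial} at $k=n$, apply Euler's pentagonal number theorem to get $I_n(n)=\sum_{u}\eps_u\binom{2n-u-1}{n-u}$, evaluate the resulting inner sums via the classical identity $\sum_{m\ge0}\binom{2m+k}{m}x^m=C(x)^k/\sqrt{1-4x}$ together with $\sqrt{1-4x}=1-2xC(x)$ and $C(x)^{-1}=1-xC(x)$, and then fold the alternating pentagonal sum back into $\Par\bigl(xC(x)\bigr)^{-1}$ by a second application of the pentagonal number theorem in the variable $xC(x)$; all steps check out (including the degenerate $\binom{-1}{0}=1$ term and the compositional-inverse fact $x(1-x)\,C\bigl(x(1-x)\bigr)=x$, which one can also see directly from $\sqrt{1-4x(1-x)}=1-2x$). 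The paper instead proves the equivalent identity $S_0\bigl(x(1-x)\bigr)=R(x)$ combinatorially: Lemma~\ref{S0-lemma} interprets $S_0\bigl(x(1-x)\bigr)$ as a signed sum over pairs $(S,\beta)$ of a marked set and a subdiagonal sequence, Theorem~\ref{part1} cancels most of these by a sign-reversing involution whose fixed points biject with pairs $(\lambda,\mu)\Vdash n$, Theorem~\ref{part2} reduces that signed count to compositions $\mu$ with $\lir(\mu)$ even, and Theorem~\ref{fixed-points} shows these are exactly what $R(x)$ counts. Your approach is shorter and leans on known identities (essentially re-deriving the Knuth--Netto formula at $k=n$ and resumming), which is a perfectly legitimate alternative; the paper's approach buys explicit combinatorial structure --- the involutions, the pairs $(\lambda,\mu)\Vdash n$, and the fixed-point compositions with even leftmost increasing run --- which is reused in the analysis of $R(x)$ in the final section and is in the spirit of the open bijective problem posed there. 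If you write yours up, just make the formal-power-series bookkeeping explicit (the interchange of the $u$- and $n$-sums, validity of the binomial identity at $k=-1$, and the two-sidedness of the compositional inverse used in deducing the $S_0\bigl(x(1-x)\bigr)=R(x)$ form), all of which you have correctly flagged as routine.
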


Before proving this we need to better understand what combinatorial
structures $R(x)$ enumerates, so we shall define a sign-reversing
involution $\phi$ on $\PIC$ that singles out a positive subset
$\Fix(\phi)$ of $\PIC$ for which
$$
\Pic(x) = \sum_{n\geq 0}|\Fix(\phi)\cap\PIC_n|\,x^n.
$$
First, however, we define the auxiliary function
\[
  \cut : \{\mu: \mu\vDash n\} \to
  \bigcup_{i=0}^n \{\lambda: \lambda\vdash i\}\times \{\mu: \mu\vDash n-i\}
\]
by $\cut(\mu) = (\lambda, \mu')$ where $\mu = \lambda\mu'$ and $\lambda$ is
the longest prefix of $\mu$ that is weakly decreasing, and thus defines a
partition. For instance, $\cut(311212) = (311,212)$,
$\cut(21)=(21,\eps)$, $\cut(12)=(1,2)$ and $\cut(\eps) = (\eps,\eps)$.
Let $\lir(\mu)$ be the length of the longest strictly increasing
prefix (also called \underline{l}eftmost \underline{i}ncreasing \underline{r}un)
of $\mu$. For instance,
$\lir(121)=2$, $\lir(213)=\lir(1122)=1$ and $\lir(\eps)=0$.

\begin{lemma}\label{split}
  Let $\lambda$ be a nonempty partition and $\mu$ a composition such
  that $\lir(\mu)$ is even. Then $\lir(\lambda\mu)$ is odd. Moreover,
  if $a$ is the last element of $\lambda$, then
  \begin{align*}
    \cut(\lambda\mu) &=
      \begin{cases}
        (\lambda, \eps) & \text{if $\mu=\eps$ is empty} \\
        (\lambda, \mu)  &\text{if $(b,\mu')=\cut(\mu)$ and $a<b$;} \\
        (\lambda b, \mu') &\text{if $(b,\mu')=\cut(\mu)$ and $a \geq b$.}
      \end{cases}
  \end{align*}
\end{lemma}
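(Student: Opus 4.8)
The plan is to verify the two claims in order, first the parity statement $\lir(\lambda\mu)$ is odd, then the case analysis for $\cut(\lambda\mu)$. Write $\lambda = c_1 c_2 \dots c_r$ with $c_1 \geq c_2 \geq \dots \geq c_r = a$, so $r \geq 1$ since $\lambda$ is nonempty. For the parity claim: if $\mu = \eps$ then $\lambda\mu = \lambda$ is weakly decreasing of positive length, so its leftmost strictly increasing run has length exactly $1$, which is odd. If $\mu \neq \eps$, write $\mu = d_1 d_2 \dots$; the leftmost strictly increasing run of $\lambda\mu$ starts at $c_1$ and, since $c_1 \geq \dots \geq c_r$, cannot extend past position $r$ unless $r = 1$; in any case the first place where a strict increase can occur is at the junction $c_r < d_1$, i.e. at position $r+1$ in $\lambda\mu$. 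Careful bookkeeping shows $\lir(\lambda\mu)$ equals $r$ if $c_r \geq d_1$, and equals $r + \lir(\mu)$ if $c_r < d_1$ — using here that when $c_r < d_1$ the run through $\lambda$ (length $r$, but possibly with equalities, so not ``increasing'' in the strict sense) interacts with $\lir(\mu)$. This is the step I expect to need the most care: reconciling the weakly-decreasing $\lambda$ with the ``strictly increasing run'' definition, since for $r \geq 2$ the prefix of $\lambda$ is not itself a strictly increasing run. I would argue instead that $\lir(\lambda\mu) = 1$ whenever $c_1 \geq c_2$ with $r\ge 2$ (the run dies at position $2$), handling $r \geq 2$ and $r = 1$ separately, and in each case checking oddness against the hypothesis that $\lir(\mu)$ is even (which forces $\lir(\mu) \neq 1$, hence $d_1 \geq d_2$ or $\mu$ has length $\leq 1$, a fact I will use below).

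For the $\cut$ formula, recall $\cut(\nu)$ returns $(\lambda', \nu')$ where $\lambda'$ is the longest weakly decreasing prefix of $\nu$. Since $\lambda = c_1 \dots c_r$ is already weakly decreasing, the longest weakly decreasing prefix of $\lambda\mu$ contains all of $\lambda$ and possibly extends into $\mu$; it extends to include $d_1$ exactly when $a = c_r \geq d_1$, and then continues through exactly the longest weakly decreasing prefix of $\mu$ that also starts $\leq a$. Now write $(b, \mu') = \cut(\mu)$, so $b = d_1 d_2 \dots d_s$ is the longest weakly decreasing prefix of $\mu$ (when $\mu \neq \eps$). If $\mu = \eps$, then $\cut(\lambda\mu) = \cut(\lambda) = (\lambda, \eps)$, giving the first case. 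If $a < b$, meaning $a < d_1$ (the first letter of $b$), then the weakly decreasing prefix of $\lambda\mu$ stops right after $\lambda$, so $\cut(\lambda\mu) = (\lambda, \mu)$, the second case. If $a \geq b$, i.e. $a \geq d_1$: here I claim the longest weakly decreasing prefix of $\lambda\mu$ is exactly $\lambda b = c_1 \dots c_r d_1 \dots d_s$. Indeed $\lambda b$ is weakly decreasing (we have $c_r \geq d_1$ and $b$ is weakly decreasing), and it cannot extend further because $b$ was the maximal weakly decreasing prefix of $\mu$, so $d_s < d_{s+1}$ (or $\mu = b$, in which case $\mu' = \eps$ and we are done). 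Hence $\cut(\lambda\mu) = (\lambda b, \mu')$, the third case.

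The one genuine subtlety is guaranteeing that when $a \geq b$ we really append all of $b$ and not a proper prefix of it; this is where I would invoke $\lir(\mu)$ even. If $\mu \neq \eps$ and $a \geq d_1$, then since $b = d_1 \dots d_s$ is weakly decreasing and $c_r = a \geq d_1 \geq d_2 \geq \dots \geq d_s$, concatenating preserves weak decrease, so no obstruction arises inside $b$; the hypothesis $\lir(\mu)$ even is used elsewhere (it rules out $\lir(\mu) = 1$ in borderline configurations and is what makes the parity statement come out odd rather than even). I would therefore organize the write-up as: (i) dispose of $\mu = \eps$; (ii) prove the $\cut$ identity by the direct prefix argument above; (iii) deduce the parity of $\lir(\lambda\mu)$ from the $\cut$ identity together with the relation $\lir(\nu) = \lir(\lambda', \nu')$-type compatibility and the assumption on $\lir(\mu)$. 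The main obstacle, as noted, is the precise interaction between the weakly-decreasing structure of $\lambda$ and the strict-increase definition of $\lir$; once that is pinned down with an explicit small lemma (''$\lir$ of a weakly decreasing sequence of length $\geq 1$ is $1$''), everything else is routine case-checking.
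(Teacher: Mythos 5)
The paper offers no proof of this lemma: it is stated as essentially immediate, followed only by the remark that, since $\lir(\mu)$ is even, a nonempty $\mu$ has $\mu_1<\mu_2$, so the longest weakly decreasing prefix of $\mu$ is the single letter $\mu_1$ --- which is exactly what makes the comparisons $a<b$ and $a\geq b$ in the statement comparisons of numbers. Your direct verification is correct in substance and amounts to making that reading explicit: with $r=\abs{\lambda}$, one has $\lir(\lambda\mu)=1$ when $r\geq 2$ (since $\lambda$ is weakly decreasing), and for $r=1$ it is $1$ if $a\geq\mu_1$ (or $\mu=\eps$) and $1+\lir(\mu)$ if $a<\mu_1$, odd in every case; and the longest weakly decreasing prefix of $\lambda\mu$ is $\lambda$ itself when $\mu=\eps$ or $a<\mu_1$, and is $\lambda$ followed by the longest weakly decreasing prefix of $\mu$ when $a\geq\mu_1$, which yields the three displayed cases.

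Two slips should be cleaned up in the write-up, though neither is load-bearing in the plan you finally adopt. First, your opening bookkeeping claim that $\lir(\lambda\mu)$ equals $r$, or $r+\lir(\mu)$, is false whenever $r\geq 2$; you notice this yourself and replace it with the correct split into $r\geq2$ and $r=1$, so simply discard the earlier version. Second, the parenthetical ``$\lir(\mu)\neq 1$, hence $d_1\geq d_2$ or $\mu$ has length $\leq 1$'' is backwards: evenness of $\lir(\mu)$ together with $\mu\neq\eps$ forces $\lir(\mu)\geq 2$, i.e.\ $\mu_1<\mu_2$, and hence that the prefix $b$ returned by $\cut(\mu)$ is the singleton $\mu_1$. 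That corrected fact is precisely the paper's remark following the lemma, and it is worth stating explicitly since it is what gives meaning to ``$a<b$''; your prefix argument for the third case happens to work even for a multi-letter $b$ (interpreting the comparison as being with its first letter), so the misstated version is never actually used, but as written it would confuse a reader.
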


Note that if $\mu$ is nonempty and $\lir(\mu)$ is even, then the first
element of $\mu$ must be smaller than the second, and hence the longest
weakly decreasing prefix of $\mu$ is a singleton (the first letter of
$\mu$). Thus the lemma above covers all cases. We now define the
promised involution $\phi$ on $\PIC_n$.

\begin{definition}\label{phi-def}
  Let $(\lambda^1,\dots,\lambda^k;\mu)\vdash n$. If $\lir(\mu)$ is even
  then
  \begin{align*}
    \phi(\lambda^1,\dots,\lambda^k;\mu) &=
    \begin{cases}
      (\emptyset; \mu) & \text{if $k=0$;} \\
      (\lambda^1,\dots,\lambda^{k-1};\lambda^k\mu) & \text{if $k>0$.} \\
    \end{cases}\\
    \intertext{If $\lir(\mu)$ is odd and $(\rho x,\mu')=\cut(\mu)$ then}
    \phi(\lambda^1,\dots,\lambda^k;\mu) &=
    \begin{cases}
      (\lambda^1,\dots,\lambda^k, \rho x;\mu') & \text{if $\lir(\mu')$ is even;} \\
      (\lambda^1,\dots,\lambda^k, \rho; x\mu') & \text{if $\lir(\mu')$ is odd.}
    \end{cases}
  \end{align*}
\end{definition}

The idea behind the map is that we can create an involution by moving a
partition $\lambda$ back and forth between being considered as part of
the list of partitions or as a prefix of our composition $\mu$. The
parity of $\lir(\mu)$ allows us to know if we, so to speak, have already
prepended a $\lambda$ or not; indeed $\lir(\lambda \mu)$ is odd if
$\lir(\mu)$ is even.

Let us look at a few cases illustrating Definition~\ref{phi-def}. A simple case
is that of a fixed point: $\lir(3644)=2$ is even and
\begin{align*}
  \phi(\emptyset;3644)  &= (\emptyset;3644).\\
  \intertext{
  Consider $(\lambda^1;\mu)=(6211;\eps) \vdash 10$. Then $\lir(\mu)=0$ is even, $k=1$ and
  }
  \phi(6211;\eps) &= (\emptyset;6211).\\
  \intertext{Another example of when $\lir(\mu)$ is even is}
  \phi(11,62;243352) &= (11;62243352).\\
  \intertext{Finally, two cases when $\lir(\mu)$ is odd are}
  \phi(11,62;643452) &= (11,62,643;452);\\
  \phi(11,62;643425) &= (11,62,64;3425).
\end{align*}

\begin{theorem}\label{fixed-points}
  The map $\phi$ is a sign-reversing involution on $\PIC_n$
  whose fixed points are of the form $(\emptyset; \mu)$ with $\mu\vDash n$
  and $\lir(\mu)$ even.
\end{theorem}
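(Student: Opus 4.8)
The plan is to verify three things in turn: that $\phi$ reverses sign, that $\phi$ is an involution, and that its fixed points are exactly the claimed tuples $(\emptyset;\mu)$ with $\lir(\mu)$ even. The sign claim is immediate from the definition, since in every case the length $k$ of the list of partitions changes by exactly one (up by one when $\lir(\mu)$ is odd, down by one when $\lir(\mu)$ is even, except for fixed points where we make no change), so $(-1)^k$ flips whenever $\phi$ moves; hence the only inputs $\phi$ can fix are those where it does nothing, and by inspection of Definition~\ref{phi-def} these are precisely the tuples with $k=0$ and $\lir(\mu)$ even, i.e.\ $(\emptyset;\mu)$ with $\mu\vDash n$ and $\lir(\mu)$ even. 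That disposes of the sign-reversal and the description of $\Fix(\phi)$, modulo knowing $\phi$ is well-defined, for which I need Lemma~\ref{split} to guarantee that when $\lir(\mu)$ is even and $k>0$ the concatenation $\lambda^k\mu$ again lies in $\PIC_n$ with $\lir$ odd (so the two halves of the definition are genuinely complementary).

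The real work is checking $\phi\circ\phi=\mathrm{id}$, and the natural strategy is a case analysis driven by the parity of $\lir(\mu)$, using the two halves of the definition as inverse to each other. First I would handle the ``even'' branch: if $\lir(\mu)$ is even and $k>0$, then $\phi$ sends $(\lambda^1,\dots,\lambda^k;\mu)$ to $(\lambda^1,\dots,\lambda^{k-1};\lambda^k\mu)$, and by Lemma~\ref{split} the new composition $\lambda^k\mu$ has odd $\lir$ and its $\cut$ peels off exactly a prefix that recovers $\lambda^k$; I then need to check that applying the ``odd'' branch to this image returns the original tuple. The subtlety is that $\cut(\lambda^k\mu)$ does not literally return $(\lambda^k,\mu)$ in all cases — Lemma~\ref{split} shows it may return $(\lambda^k b,\mu')$ where $(b,\mu')=\cut(\mu)$ and $b$ is the first letter of $\mu$ — so I must track how the odd-branch split point $(\rho x,\mu')$ relates to $\lambda^k$ and $\mu$, and verify that the secondary case split in the odd branch (on the parity of $\lir(\mu')$) exactly matches which of the three cases of Lemma~\ref{split} we are in. Conversely, starting from a tuple with $\lir(\mu)$ odd, write $(\rho x,\mu')=\cut(\mu)$ and follow $\phi$ into whichever of its two odd-branch outputs applies; in each I must confirm that the resulting composition has even $\lir$, so that the even branch fires on the way back, and that prepending the last partition of the new list reconstitutes $\mu$ — here again Lemma~\ref{split} (read in reverse) is the tool, since $\rho x$ is a nonempty partition and $\mu'$ or $x\mu'$ plays the role of the even-$\lir$ composition.

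The delicate bookkeeping point — and what I expect to be the main obstacle — is matching the case split $\lir(\mu')$ even/odd in the odd branch of $\phi$ with the case split $a\lessgtr b$ in Lemma~\ref{split}, i.e.\ making sure that ``the partition $\rho x$ does not absorb the first letter of $\mu'$'' corresponds to ``$\lir(\mu')$ is even'' and that the borderline behaviour at a weak descent versus a strict ascent is handled consistently in both directions. Once that correspondence is nailed down, each of the (at most four or five) compositions $\phi\circ\phi$ collapses to the identity by direct substitution, so I would organize the proof as: (i) well-definedness via Lemma~\ref{split}; (ii) sign reversal and fixed-point description by the parity-of-$k$ remark; (iii) the even-then-odd round trip; (iv) the odd-then-even round trip. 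Steps (iii) and (iv) are where Lemma~\ref{split} does all the heavy lifting, and I would present them as a short table of cases rather than prose.
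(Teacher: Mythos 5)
Your plan is correct and follows essentially the same route as the paper's proof: sign reversal and the identification of the fixed points come from noting that every non-trivial case of Definition~\ref{phi-def} changes the length of the partition list by one, and the involution property is verified by the even-then-odd and odd-then-even round trips, with Lemma~\ref{split} matching the case split $a<b$ versus $a\geq b$ (and $\mu=\eps$) against the parity of $\lir(\mu')$. The case-by-case substitutions you defer to ``direct substitution'' are exactly the computations the paper writes out, so nothing essential is missing.
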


\begin{proof}
  Let $w =(\lambda^1,\dots,\lambda^k;\mu)\vdash n$ be given. It is clear
  that $\phi(w)\vdash n$ and that the first case of the definition,
  namely $\lir(\mu)$ is even and $k=0$, covers all fixed
  points. Further, the second case shortens the list of partitions by
  one while the third and fourth cases lengthen the same list by one. In
  all three cases the sign of $w$ is thus reversed. It remains to show that
  $\phi(\phi(w))=w$ and we consider each of the last three cases of the definition 
  of $\phi$ separately.

  If $\lir(\mu)$ is even and $k>0$, then
  $\phi(w)=(\lambda^1,\dots,\lambda^{k-1};\lambda^k\mu)$. To show that
  $\phi(\phi(w)) = w$ we consider the three cases of
  Lemma~\ref{split}.  If $\mu$ is empty then
  $\cut(\lambda^k\mu)=(\lambda^k,\eps)$, $\lir(\eps)=0$ is even and
  $$\phi(\lambda^1,\dots,\lambda^{k-1};\lambda^k\mu)
  = (\lambda^1,\dots,\lambda^{k-1},\lambda^k;\mu) = w.
  $$
  If $\mu$ is nonempty then let $(b,\mu')=\cut(\mu)$. Also, let $a$ be
  the last element of $\lambda^k$. If $a<b$ then
  $\cut(\lambda^k\mu) = (\lambda^k,\mu)$, $\lir(\mu)$ is even (by
  assumption) and
  $$\phi(\lambda^1,\dots,\lambda^{k-1};\lambda^k\mu)
  = (\lambda^1,\dots,\lambda^{k-1},\lambda^k;\mu) = w.
  $$
  If $a\geq b$ then $\cut(\lambda^k\mu) = (\lambda^kb,\mu')$,
  $\lir(\mu')=\lir(\mu)-1$ is odd and
  $$\phi(\lambda^1,\dots,\lambda^{k-1};\lambda^k\mu)
  = (\lambda^1,\dots,\lambda^{k-1},\lambda^k;b\mu')
  = (\lambda^1,\dots,\lambda^k;\mu) =  w.
  $$

  If $\lir(\mu)$ is odd then let $(\rho x,\mu')=\cut(\mu)$. If, in
  addition, $\lir(\mu')$ is even, then
  $$\phi(\phi(w))
  = \phi(\lambda^1,\dots,\lambda^k, \rho x;\mu')
  = (\lambda^1,\dots,\lambda^k; \rho x\mu') = w.
  $$
  If $\lir(\mu')$ is odd, then
  $\phi(w)=(\lambda^1,\dots,\lambda^k, \rho; x\mu')$ and, since
  $\lir(x\mu')$ is even,
  $$
  \phi(\phi(w))
  = \phi(\lambda^1,\dots,\lambda^k, \rho; x\mu')
  = (\lambda^1,\dots,\lambda^k; \rho x\mu') = w,
  $$
  which concludes the last case and thus also the proof.
\end{proof}

Next we aim at proving Theorem~\ref{S0-thm}. That is, we wish to prove
that
\begin{equation}\label{S0-eq}
  S_0(x(1 - x)) = R(x)
\end{equation}
and we start by giving a combinatorial interpretation of
$S_0(x(1 - x))$.

Let $T_{n,k}$ be the set of pairs $(S,\beta)$, where
$S\subseteq [n-k]$, $|S|=k$, and
\[
  \beta=(\beta_1,\beta_2,\dots,\beta_{n-k})
\] is a subdiagonal sequence with sum
$\beta_1 + \beta_2 +\cdots+ \beta_{n-k} = n-k$. Also, let
\[
  T_n = \bigcup_{k = 0}^n T_{n, k}.
\]
For instance, $T_0=\{(\emptyset, \eps)\}$, $T_1$ and $T_2$ are empty,
$T_3 = \{(\emptyset, 012)\}$,
and $T_4$ consists of the following $8$ elements:
\begin{gather*}
  (\emptyset, 0121),\, (\emptyset, 0112),\, (\emptyset, 0103),\, (\emptyset, 0022),\, (\emptyset, 0013),\\
  (\{1\},012),\, (\{2\},012),\, (\{3\},012).
\end{gather*}

\begin{lemma}\label{S0-lemma}
  We have
  \[
  S_0(x(1 - x)) = \sum_{n \geq 0} \Biggl(\sum_{(S,\beta)\,\in\, T_n} (-1)^{|S|}\Biggr)x^n.
  \]
\end{lemma}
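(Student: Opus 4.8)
The plan is to start from the left-hand side, substitute $x \mapsto x(1-x)$ into the definition $S_0(x) = \sum_{m\ge 0} |\sym_m^m|\, x^m$, and expand $x^m(1-x)^m$ by the binomial theorem. Concretely, $S_0(x(1-x)) = \sum_{m\ge 0} |\sym_m^m| \sum_{j\ge 0} \binom{m}{j}(-1)^j x^{m+j}$. Setting $n = m+j$ and $k = j$, so that $m = n-k$, the coefficient of $x^n$ becomes $\sum_{k\ge 0}(-1)^k \binom{n-k}{k}\,|\sym_{n-k}^{n-k}|$. So the combinatorial content to verify is precisely that
\[
  \sum_{(S,\beta)\,\in\, T_n} (-1)^{|S|} \;=\; \sum_{k\ge 0}(-1)^k \binom{n-k}{k}\,\bigl|\sym_{n-k}^{n-k}\bigr|.
\]

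The second step is to match each side term-by-term in $k = |S|$. By the definition of $T_{n,k}$, an element $(S,\beta)$ with $|S| = k$ consists of a $k$-subset $S \subseteq [n-k]$ together with a subdiagonal sequence $\beta$ of length $n-k$ whose entries sum to $n-k$. The number of choices for $S$ is $\binom{n-k}{k}$. The number of choices for $\beta$ is the number of subdiagonal sequences of length $n-k$ with entry-sum $n-k$; since subdiagonal sequences of length $m$ are exactly the inversion tables of permutations in $\sym_m$, and the entry-sum of the inversion table equals $\inv$, this count is exactly $|\sym_{n-k}^{n-k}|$. Hence $|T_{n,k}| = \binom{n-k}{k}\,|\sym_{n-k}^{n-k}|$, and since every element of $T_{n,k}$ carries sign $(-1)^{|S|} = (-1)^k$, summing over $k$ gives the displayed identity. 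Combining with the binomial expansion from the first step yields the lemma. As a sanity check one can read off $n = 4$: the $k=0$ contribution is $|\sym_4^4| = 5$ (matching the five sign-$+$ elements $(\emptyset,\beta)$ listed), and the $k=1$ contribution is $-\binom{3}{1}|\sym_3^3| = -3\cdot 1 = -3$ (matching the three sign-$-$ elements $(\{i\},012)$), for a total of $2$, which is indeed the coefficient of $x^4$ in $S_0(x(1-x))$ given that $S_0(x) = 1 + x^3 + 5x^4 + \cdots$ and $x(1-x) = x - x^2$.

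There is no serious obstacle here; the only point requiring a moment of care is bookkeeping the index shift $n = m + j$ and confirming that the range of $k$ is automatically bounded (the binomial $\binom{n-k}{k}$ vanishes once $k > n-k$, and $T_{n,k}$ is correspondingly empty, so the sum is finite and no convergence issue arises). I would also remark explicitly, for the reader's benefit, that the identification of subdiagonal sequences of length $m$ with inversion tables of $\sym_m$—already noted in the paragraph preceding Lemma~\ref{inv-catalan}—is what turns the ostensibly arithmetic quantity "number of subdiagonal sequences of length $m$ summing to $m$" into $|\sym_m^m|$, the coefficient appearing in $S_0$.
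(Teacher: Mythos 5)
Your proposal is correct and follows essentially the same route as the paper's own proof: expand $S_0(x(1-x))$ by the binomial theorem, reindex so the coefficient of $x^n$ is $\sum_k(-1)^k\binom{n-k}{k}\bigl|\sym_{n-k}^{n-k}\bigr|$, and identify this with the signed count of $T_{n,k}$ via the correspondence between subdiagonal sequences and inversion tables. Nothing is missing; the $n=4$ check is a nice touch but not needed.
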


\begin{proof}
  Let $(S,\beta)\in T_{n,k}$. View $\beta$ as an inversion table and let
  $\pi$ be the corresponding permutation on $[n-k]$. Note that $\pi$ has
  exactly $n-k$ inversions and thus the cardinality of $T_{n,k}$ is
  $\abs{\sym^{n-k}_{n-k}}\binom{n-k}{k}$. The result now follows from a
  direct calculation:\smallskip
  \begin{align*}
    S_0(x(1 - x))
    &= \sum_{n \geq 0} \abs{\sym^n_n} x^n (1 - x)^n \\
    &= \sum_{n \geq 0} \abs{\sym^n_n} x^n \sum_{k = 0}^n \binom{n}{k} (-1)^k x^k \\
    &= \sum_{n \geq 0} \Biggl(\,\sum_{k = 0}^n \abs{\sym^{n - k}_{n - k}} \binom{n - k}{k} (-1)^k\Biggr) x^n\\
    &= \sum_{n \geq 0} \sum_{k = 0}^n\Biggl(\sum_{(S,\beta)\,\in\, T_{n,k}} (-1)^{|S|}\Biggr)x^n \\
    &= \sum_{n \geq 0} \Biggl(\sum_{(S,\beta)\,\in\, T_n} (-1)^{|S|}\Biggr)x^n.
    \qedhere
  \end{align*}
\end{proof}

We shall show that the set $T_n$ in the inner summation in
Lemma~\ref{S0-lemma} can be replaced with a smaller set, but first a few
definitions.

For a composition $\mu=(\mu_1,\dots,\mu_k)$ define $\dmax(\mu)$ as $0$
if $k\leq 1$ and
\[
  \dmax(\mu) = \max\{\,\mu_j - j + 1: 2 \leq j \leq k \,\}
\]
otherwise. If we plot $\mu_j$ against $j$ this is the largest distance
it goes over the line $y=x-1$, excluding $\mu_1$ for technical reasons. For
instance, if $\mu=3241261$ then $\dmax(\mu)=\mu_3-3+1=2$ as depicted
below:
\[
  \begin{tikzpicture}[scale=0.5]
    \draw[blue!20,thin] (-0.5,0) -- (6.5,0);
    \draw[blue!20,thin] (-0.5,1) -- (6.5,1);
    \draw[blue!20,thin] (-0.5,2) -- (6.5,2);
    \draw[blue!20,thin] (-0.5,3) -- (6.5,3);
    \draw[blue!20,thin] (-0.5,4) -- (6.5,4);
    \draw[blue!20,thin] (-0.5,5) -- (6.5,5);
    \draw[blue!20,thin] (-0.5,6) -- (6.5,6);
    \draw[blue!20,thin] (0,-0.5) -- (0,6.5);
    \draw[blue!20,thin] (1,-0.5) -- (1,6.5);
    \draw[blue!20,thin] (2,-0.5) -- (2,6.5);
    \draw[blue!20,thin] (3,-0.5) -- (3,6.5);
    \draw[blue!20,thin] (4,-0.5) -- (4,6.5);
    \draw[blue!20,thin] (5,-0.5) -- (5,6.5);
    \draw[blue!20,thin] (6,-0.5) -- (6,6.5);
    \draw[thick, blue!30] (0.5,-0.5) -- (6.6,5.6);
    \foreach \x/\y in {0/0,0/1,0/2,1/0,1/1,2/0,2/1,2/2,2/3,3/0,4/0,4/1,5/0,5/1,5/2,5/3,5/4,5/5,6/0} {
      \draw[ultra thick, white, fill=gray] (\x,\y) circle (0.21);
    }
    \foreach \x/\y in {0/2,1/1,2/3,3/0,4/1,5/5,6/0} {
      \draw[ultra thick, white, fill=red!70] (\x,\y) circle (0.21);
    }
    \node at (0,-1) {$\scriptstyle{\mu_1}$};
    \node at (1,-1) {$\scriptstyle{\mu_2}$};
    \node at (2,-1) {$\scriptstyle{\mu_3}$};
    \node at (3,-1) {$\scriptstyle{\mu_4}$};
    \node at (4,-1) {$\scriptstyle{\mu_5}$};
    \node at (5,-1) {$\scriptstyle{\mu_6}$};
    \node at (6,-1) {$\scriptstyle{\mu_7}$};
  \end{tikzpicture}
\]

Up until this point we have listed the parts of a partition $\lambda$ in
weakly decreasing order. In what follows, it will be convenient to
instead list them in weakly increasing order. For instance, we may
write $\lambda = (\lambda_1,\lambda_2,\lambda_3) = (1,3,4)\vdash 8$.

\begin{definition}\label{good-pair}
  Let $\lambda$ be an integer partition and $\mu$ an integer composition.
  Let their total sum be $n$ and let $d=\dmax(\mu)$.
  We shall write
  \[(\lambda, \mu)\Vdash n
  \]
  if the following three conditions hold:
  \begin{itemize}
  \item $\lambda$ has distinct parts (and is hence strictly increasing);
  \item $\lambda \neq \eps \implies \lambda_{\abs{\lambda}} < d$,
  \item $\mu\neq \eps \implies \mu_1 \leq d$.
  \end{itemize}
\end{definition}
For instance, $(\lambda,\mu)$ with $\mu=3241261$ as in the example above
does not satisfy Definition~\ref{good-pair} regardless of what the
partition $\lambda$ is; the reason being that $3=\mu_1 > \dmax(\mu) =
2$.  Let us consider the sets of pairs $(\lambda, \mu)\Vdash n$ for
small $n$. For $n=0$ there is a single pair, $(\eps, \eps)$;
for $n=1,2$ there are none; for $n=3$ there is a single pair,
$(\eps, 12)$; for $n=4$ there are two, $(\eps, 121)$ and
$(\eps, 13)$; and for $n=5$ there are seven:
\begin{gather*}
  (\eps, 113),\,
  (\eps, 1211),\,
  (\eps, 122),\,
  (\eps, 131),\,
  (\eps, 14),\,
  (\eps, 23),\,
  (1, 13).
\end{gather*}
As a larger example we offer $(134, 161121)\Vdash 20$.

\begin{theorem}\label{part1}
  We have
  \[
    \sum_{(S,\beta)\,\in\, T_n} (-1)^{|S|}
    = \sum_{(\lambda, \mu) \,\Vdash\, n} (-1)^{\abs{\lambda}}.
  \]
\end{theorem}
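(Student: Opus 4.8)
The plan is to prove the identity by a sign‑reversing involution on $T_n$, where the sign of a pair $(S,\beta)$ is $(-1)^{|S|}$. The goal is to show that after cancellation only a set of fixed points survives, and that this set is in sign‑preserving bijection with $\{(\lambda,\mu)\Vdash n\}$, where the sign of $(\lambda,\mu)$ is $(-1)$ raised to the number of parts of $\lambda$ (which is how $|\lambda|$ is to be read on the right‑hand side, consistently with $\lambda_{|\lambda|}$ in Definition~\ref{good-pair}).

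First I would put the data $(S,\beta)\in T_{n,k}$ into a more flexible shape. Since $\beta$ is a subdiagonal sequence of length $m:=n-k$ whose entries sum to $m$, write $\beta=0^{e}\mu'$, where $e$ is the number of leading zeros and $\mu'$ is the (possibly empty) composition left after them. Then $e=\sum_i(\mu'_i-1)$ is forced by the equality of length and sum, and subdiagonality becomes $\mu'_1\le e$ together with $\mu'_j-j+1\le e$ for $j\ge 2$; the latter says precisely $\dmax(\mu')\le e$. So $\beta$ already "looks like a composition placed on the $e$‑th subdiagonal," but three things can still go wrong relative to a good pair: $\mu'$ may have interior zeros, the good‑pair inequality $\mu'_1\le\dmax(\mu')$ may fail, and we have not yet accounted for $S$. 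The involution $\psi$ is then defined by scanning left to right for the first such defect and performing a reversible local edit there: in the spirit of the map $\phi$, a single unit is moved back and forth between the zero‑prefix of $\beta$ and the set $S$ (equivalently, an $S$‑marked entry is absorbed into, or emitted from, the leading block of zeros), so that $|S|$ changes by exactly one and the sign is reversed. Involutivity comes down to checking that the edit does not create an earlier defect, so that $\psi$ relocates the scan to the same place.

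The fixed points are exactly the pairs in "good form": $\beta=0^{d}\mu$ with $\mu$ a genuine composition satisfying $\mu_1\le d=\dmax(\mu)$, together with the $S$‑marked entries assembling into a partition $\lambda$ with distinct parts and largest part $<d$; reading off $(\lambda,\mu)$ from such a fixed point gives the bijection, with $|S|$ equal to the number of parts of $\lambda$, so the signs agree. The step I expect to be the main obstacle is designing this single involution so that it simultaneously straightens interior zeros, enforces $\mu_1\le\dmax(\mu)$, and legally places the $S$‑marked entries, all while keeping the constraint "length $=$ sum" invariant — which forces the parts of $\lambda$ to contribute their own entries to $\beta$ (an entry of value $\lambda_i-1$ lying on the line $y=x-1$), so that $\sum_i\beta_i=\sum_j\mu_j+\sum_i(\lambda_i-1)=n-\ell(\lambda)$ matches the length of $\beta$. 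Getting the priority order of the defects right, and verifying that resolving one never resurrects an earlier one, is the crux; once that is in place, the remaining checks — distinctness of the parts of $\lambda$, the inequalities $\lambda_{|\lambda|}<d$ and $\mu_1\le d$ of Definition~\ref{good-pair}, and the fact that every good pair is hit exactly once — are routine.
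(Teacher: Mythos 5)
Your overall strategy is the same as the paper's: a sign-reversing involution on $T_n$, with sign $(-1)^{|S|}$, whose fixed points are mapped sign-preservingly to pairs $(\lambda,\mu)\Vdash n$, the parts of $\lambda$ being recorded inside $\beta$ as marked diagonal entries. The problem is that the proposal stops exactly where the proof has to start. The involution is never defined: you describe it only as ``scan for the first defect and perform a reversible local edit'' in which ``a single unit is moved back and forth between the zero-prefix of $\beta$ and the set $S$'', and you yourself flag the choice of the defects, their priority order, and the verification that resolving one never creates an earlier one as ``the crux''. That crux is the entire content of the theorem, and nothing in the sketch shows it can be carried out; in particular a bare unit transfer between a zero-prefix and $S$ does not by itself preserve the constraint that the length of $\beta$ equals its sum, nor subdiagonality. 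The paper's actual moves are more intricate: a marked entry $\beta_i$ is replaced by the unmarked bigram $0(\beta_i+1)$, or incremented while an unmarked zero is appended at the end, and conversely a $0$-ascent bigram is contracted into a single marked entry, or an entry is decremented and marked while a trailing zero is deleted; the four cases carry applicability conditions phrased in terms of $0$-ascents and diagonal indices lying to the \emph{right} of $i$ (the scan is by descending index, not left to right), and involutivity is checked case by case (case~1 cancels case~3, case~2 cancels case~4), using the fact that an appended trailing zero cannot trigger any case at a larger index. None of this is recoverable from your outline, so as it stands there is a genuine gap rather than a proof.

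A smaller inaccuracy: your description of the fixed points as $\beta=0^{d}\mu$ is not the right shape. In a fixed point the length-$d$ prefix is not all zeros --- the marked entries sit inside it as diagonal entries of value $\lambda_i-1$ at positions $\lambda_i$ (as your weight count later presupposes), the prefix must end in an unmarked zero (this is what forces $\lambda_{\abs{\lambda}}<d$), and in general a block of unmarked trailing zeros must follow $\mu$, since the length of $\beta$ is forced to be $n-|S|$ while $d+$ (number of parts of $\mu$) is typically smaller; without those trailing zeros your ``good form'' objects need not lie in $T_n$ at all. Establishing this precise $\sigma\tau\zeta$ structure of the fixed points, proving that $\dmax(\mu)$ equals the prefix length so that the inequalities of Definition~\ref{good-pair} hold, and inverting the correspondence are the second substantial part of the paper's argument; in your write-up they are asserted, not derived. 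So the plan is sound and parallel to the paper's, but both pillars --- the explicit involution and the fixed-point analysis --- remain to be supplied.
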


\begin{proof}
  We shall give a sign-reversing
  involution on $T_n$ whose fixed points can be bijectively mapped
  to pairs $(\lambda, \mu) \Vdash n$.

  Let $(S, \beta) \in T_{n,n-r}$ with
  $\beta = (\beta_1, \beta_2, \dots, \beta_r)$.  We will say that
  $\beta_i$ is \emph{marked} if $i \in S$. An index $i$ such that
  $\beta_i = 0$ and $\beta_{i+1} > 0$ will be called a \emph{0-ascent}.
  If $\beta_i = i - 1$, then we call $i$ a
  \emph{diagonal index} and $\beta_i$ a \emph{diagonal entry}.
  We shall now define an endofunction
  \[\psi : T_n \rightarrow T_n
  \]
  which we will later prove is a
  sign-reversing involution. Consider the entries $\beta_i$ in descending
  order by index and define $\psi(S, \beta)$ according to which of the
  following four mutually exclusive cases is encountered first:

  \begin{itemize}
  \item[1.] If $\beta_i$ is marked and there is no 0-ascent $j > i$,
    then we replace $\beta_i$ with an unmarked bigram $xy$ whose first
    letter is zero, $x=0$, and whose last letter is $y=\beta_i + 1$. In
    particular, $S$ is mapped to $S\setminus\{i\}$.\smallskip

  \item[2.] If $\beta_i$ is marked, $i$ is not a diagonal index and
    there is a 0-ascent $j > i$, then we replace $\beta_i$ with an
    unmarked $\beta_i + 1$ and append an unmarked zero to the end of
    $\beta$. Again, $S$ is mapped to
    $S\setminus\{i\}$.\smallskip

  \item[3.] If $\beta_i$ and $\beta_{i+1}$ are both unmarked, $i$ is a
    0-ascent and there is no diagonal index $j > i + 1$, then we replace
    the bigram $\beta_i\beta_{i+1}$ by a single marked
    $\beta_{i+1}-1$. Here, $S$ is mapped to $S\cup \{i\}$.\smallskip

  \item[4.] If $\beta_i \neq 0$ is unmarked, $\beta_r = 0$ and there is
    some 0-ascent $j > i$, then we replace $\beta_i$ by a marked
    $\beta_i - 1$ and remove $\beta_r$. Here, $S$ is mapped to
    $S\cup \{i\}$.
  \end{itemize}

  If none of these cases are encountered we let
  $\psi(S, \beta)=(S, \beta)$ be a fixed point. It is easy to see that
  each case preserves subdiagonality. Cases 1 and 2 remove a mark,
  increase the sum by one and add an element; consequently the image
  $\psi(S, \beta)$ is in $T_{n,n-r-1}$.  Cases 3 and 4 add a mark,
  decrease the sum by one and remove an element, so in these two cases
  $\psi(S, \beta)$ is in $T_{n,n-r+1}$. Thus $\psi$ is well-defined and
  sign-reversing. Let us consider some examples:

  \begin{itemize}
  \item Case~1 at $i = 3$: $\psi(\{1, 3\}, 0103) = (\{1\}, 01013)$
  \item Case~2 at $i = 3$: $\psi(\{2, 3\}, 0010150) = (\{2\}, 00201500)$
  \item Case~3 at $i = 4$: $\psi(\{1\}, 002040) = (\{1, 4\}, 00230)$
  \item Case~4 at $i = 2$: $\psi(\{3\}, 0120250000) = (\{2, 3\}, 002025000)$
  \item A fixed point: $\psi(\{1, 3\}, 0020152000) = (\{1, 3\}, 0020152000)$.
  \end{itemize}

  Next we shall prove that $\psi$ is an involution; that is,
  $\psi(\psi(S, \beta))=(S, \beta)$. If $(S, \beta)$ is a fixed point,
  then the claim is trivially true, so we can assume that we encounter
  one of the four cases above. Suppose $\psi(S, \beta) = (T, \gamma)$
  after falling into one of the cases at $\beta_i$. We want to show that
  $\psi(T, \gamma)=(S, \beta)$. The map $\psi$ leaves the suffix
  $\beta_{i+2}\beta_{i+3}\dots\beta_r$ of $\beta$ unchanged, aside from
  possibly appending an unmarked trailing zero; hence this suffix, with
  possibly an appended zero, will also be present in $\gamma$. If no
  zero was appended then clearly none of the cases apply to
  $(T, \gamma)$ at $j > i$, or else that case would have applied to
  $(S, \beta)$ as well. Suppose a zero was appended.  Since this
  trailing zero is unmarked $(T, \gamma)$ cannot fall into case~1 or 2
  for any $j > i$. Adding a zero at the end cannot introduce a 0-ascent,
  so $(T, \gamma)$ cannot fall into case~3 for $j > i$. Case~4 is also
  easy to exclude, so we conclude that $(T, \gamma)$ cannot fall into
  any of the four cases at an index $j > i$.

  Going through each of the cases at index $i$, we see that if $\beta_i$ falls into case~1,
  then $\gamma_i$ must fall into case~3, which undoes what case~1 just did. Similarly,
  case~2 is cancelled by case~4, 3 by 1, and 4 by 2; thus, $\psi$ is an involution.

  We now consider the fixed points of $\psi$. We wish to show that any
  nonempty fixed point $(S,\beta)$ of $\psi$, when considered as a marked
  sequence, can be written
  \[\sigma\tau\zeta
  \]
  where each letter of $\sigma$ is either a marked diagonal entry or an
  unmarked zero, ending in an unmarked zero;
  $\tau$ consists of unmarked positive entries at least one of
  which is a diagonal entry; and $\zeta$ is a (possibly empty) sequence
  of zeros.
  One instance of a fixed point is $(\emptyset, 0121)$ in
  which $\sigma=0$, $\tau=121$ and $\zeta=\epsilon$. Another
  instance is $(\{1, 3\}, 0020152000)$ in which $\sigma=0020$,
  $\tau=152$ and $\zeta=000$.

  Since $\beta$ is subdiagonal it starts with a zero. Moreover,
  its sum $r$ is positive, and hence
  it must contain a 0-ascent. Let $\sigma$ be the prefix of $\beta$
  consisting of every letter of $\beta$ up to and including
  the rightmost 0-ascent. Define $\tau$ as the subsequent contiguous run of positive
  entries in $\beta$ and let the remaining suffix be $\zeta$.
  In particular, $\tau$ is nonempty. Also, $\zeta$ must consist entirely of zeros;
  otherwise, it would contain a 0-ascent,
  contradicting that $\zeta$ is right of the rightmost 0-ascent in
  $\beta$. Now, if $\beta_i$ is marked
  and there is no 0-ascent $j > i$, then case~1 would apply at $\beta_i$.
  Thus every entry of $\sigma_{\abs{\sigma}}\tau\zeta$ must be unmarked.
  There also has to be a diagonal entry in $\tau$, otherwise
  the bigram $\sigma_{\abs{\sigma}} \tau_1$ would make us fall into case~3.
  Thus, there is an $\ell > 1$ such that $\tau_\ell = \ell - 1 + \abs{\sigma}$.
  If $\sigma_i$ is marked, then $i$ is a
  diagonal index, else we would fall into case~2 at $\sigma_i$ because it is to the left
  of a 0-ascent. To show that $\sigma$ is of the desired form we shall consider two cases.

  Suppose $\zeta$ is empty. Every element to the right of $\tau_\ell$
  is $\geq 1$, and $\tau_1 > 0$, so
  \[\tau_1 + \tau_\ell + \dots + \tau_{r - \abs{\sigma}}
    \geq 1 + (\ell - 1 + \abs{\sigma}) + (r - \abs{\sigma} - \ell) \geq r.
  \]
  The sum of entries in $\beta$ is $r$ (by definition of $T_{n,n-r}$) and
  consequently the sum above is exactly $r$. Thus,
  every entry of $\sigma$ is zero. Aside from the first one,
  none of those zeros can be marked, or else we would have a contradiction with the
  earlier result that any marked element of $\sigma$ is a diagonal entry. Thus
  $\sigma$ is of the desired form.

  Suppose $\zeta$ is nonempty. There cannot be any positive unmarked $\sigma_i$
  since then we would fall into case~4 at $\sigma_i$. Thus, $\sigma$ is
  of the desired form by the same argument as above.

  Let us now define a function $\theta$ mapping fixed points of $\psi$
  to pairs $(\lambda, \mu) \Vdash n$.
  Given a fixed point $(S, \beta)$ factored as
  $\sigma\tau\zeta$ we let $\theta(S, \beta)=(\lambda,\mu)$, where
  $\lambda$ consists of the marked indices of $\sigma$ written in increasing order
  and $\mu = \tau$. In other words, the entries of $\lambda$ are the elements of
  $S$, the reason being that $\tau$ and $\zeta$ contain only unmarked elements.
  For example $\theta(\emptyset, 0121) = (\epsilon, 121)$ and $\theta(\{1, 3\},
  0020152000) = (13, 152)$. It is clear that
  $\lambda$ has distinct parts and that $\mu$ defines a composition.
  Furthermore, the sum of values in $\lambda$ and $\mu$ is the sum
  of elements in $\beta$ plus the number of marked elements, which is $r + n - r = n$.
  Note that the sign simply is $(-1)^{\abs{\lambda}}$.

  We wish to show that $(\lambda, \mu) \Vdash n$. Our diagonal index
  $\ell$ gives us
  \begin{align*}
    \dmax(\mu)
    &\geq \mu_{\ell}-\ell+1 \\
    &= \tau_{\ell}-\ell+1 \\
    &= (\ell - 1 + \abs{\sigma}) - \ell + 1 = \abs{\sigma}.
  \end{align*}
  Suppose $\dmax(\mu) = \mu_j - j + 1$. Then
  \begin{align*}
   \dmax(\mu)
    &= \beta_{j+\abs{\sigma}} - j + 1 \\
    &\leq j + \abs{\sigma} - 1 - j + 1 = \abs{\sigma}.
  \end{align*}
  in which the inequality is a consequence of subdiagonality.
  Thus $\dmax(\mu) = \abs{\sigma}$. If $\lambda$ is nonempty, then $\lambda_1$
  corresponds to a marked diagonal index, which must then be in $\sigma$. Thus
  $\lambda < \abs{\sigma} = \dmax(\mu)$ since $\sigma$ ends on a zero. If
  $\mu$ is nonempty, then
  $\mu_1 = \tau_1 \leq \abs{\sigma}$ by
  subdiagonality, and hence $\mu_1 \leq \dmax(\mu)$.
  Thus $(\lambda, \mu) \Vdash n$.

  To complete our proof we have to show that $\theta$ is bijective,
  which we do by constructing its inverse. Assume that
  $(\lambda, \mu) \Vdash n$ and $k = \dmax(\mu)$. Let
  $\sigma=\sigma_1\sigma_2\dots\sigma_k$, where $\sigma_i=i-1$ is a
  marked diagonal entry if $i=\lambda_j$ for some $j\in [|\lambda|]$, and
  $\sigma_i=0$ is an unmarked zero otherwise. Also,
  let $\tau = \mu$ and let $\zeta$ be a segment consisting of $n - \abs{\sigma} - \abs{\tau}$
  unmarked zeros. By the same argument as above we
  have $\abs{\lambda}$ marked elements, and the sum of all elements
  in $\lambda$ and $\mu$ is $n - \abs{\lambda}$, so $r = \abs{\lambda}$.
  Since $\lambda \neq \epsilon \Rightarrow \lambda_1 < \dmax(\mu)$ we have
  $\sigma_{\abs{\sigma}} = 0$. Thus the image is in $T_{n, n - r}$ and
  $\theta$ maps $\sigma\tau\zeta$ back to $(\lambda, \mu)$, completing our proof.
\end{proof}

\begin{theorem}\label{part2}
  We have
  \[
    \sum_{(\lambda, \mu) \,\Vdash\, n} (-1)^{\abs{\lambda}} \,=\,
    \bigl|\{\mu\vDash n:\text{$\lir(\mu)$ even}\}\bigr|.
  \]
\end{theorem}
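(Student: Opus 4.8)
The plan is to exhibit a sign-reversing involution on the set of pairs $(\lambda,\mu)\Vdash n$ whose fixed points are precisely the pairs $(\eps,\mu)$ with $\mu\vDash n$ and $\lir(\mu)$ even. This mirrors exactly the structure of Theorem~\ref{fixed-points}, so the natural approach is to relate $\Vdash$-pairs to the objects $\PIC_n$ on which $\phi$ already acts. Concretely, I would first observe that a $\Vdash$-pair $(\lambda,\mu)$ with $\lambda=(\lambda_1<\dots<\lambda_k)$ can be recoded as a signed tuple $(\{\lambda_1\},\{\lambda_2\},\dots,\{\lambda_k\};\mu)$ of \emph{singleton} partitions followed by the composition $\mu$, with sign $(-1)^k=(-1)^{\abs\lambda}$; the conditions in Definition~\ref{good-pair} (distinctness of the parts, $\lambda_k<\dmax(\mu)$, and $\mu_1\le\dmax(\mu)$) are precisely what pin down which tuples in $\PIC_n$ arise this way. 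So the first step is to make this encoding precise and check it is a sign-preserving bijection onto a distinguished subset of $\PIC_n$.

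The second step is to run the involution $\phi$ of Definition~\ref{phi-def} and verify it restricts to this subset. Here the key point to check is that $\phi$ never creates a non-singleton partition in the list \emph{and} never violates the $\dmax$ bounds, when started from an encoded $\Vdash$-pair. The cases of $\phi$ either move the last list-partition $\lambda^k$ in front of $\mu$ (when $\lir(\mu)$ is even) or peel a prefix off $\mu$ to form a new list-partition (when $\lir(\mu)$ is odd); since the peeled-off prefix $\rho x$ or $\rho$ is, by Lemma~\ref{split}, a weakly decreasing run, I must argue that in the relevant regime this run is forced to be a single letter. That single-letter phenomenon is exactly the remark after Lemma~\ref{split}: if $\mu$ is nonempty with $\lir(\mu)$ even then the longest weakly decreasing prefix of $\mu$ is its first letter. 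Tracking the $\dmax$ inequalities through the move is the fiddly part, but it is bookkeeping of the same flavour as the end of the proof of Theorem~\ref{part1}.

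Alternatively — and this may be cleaner — I would set up a \emph{direct} involution on $\Vdash$-pairs without routing through $\PIC_n$ at all: given $(\lambda,\mu)\Vdash n$ with $d=\dmax(\mu)$, if $\lir(\mu)$ is odd, detach the first letter $\mu_1$ of $\mu$ and adjoin the part $\mu_1-1$ to $\lambda$ (possibly after adjusting as in $\cut$); if $\lir(\mu)$ is even and $\lambda\neq\eps$, remove the largest part $\lambda_k$ of $\lambda$ and prepend $\lambda_k+1$ to $\mu$; and declare $(\eps,\mu)$ with $\lir(\mu)$ even a fixed point. The sign flips because $\abs\lambda$ changes by one. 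The involutivity reduces, via Lemma~\ref{split}, to the same parity-of-$\lir$ accounting that made $\phi$ work. I expect the main obstacle in either route to be verifying that the $\dmax$ constraints are preserved in \emph{both} directions — that moving a letter in or out of $\mu$ keeps $\mu_1\le\dmax(\mu)$ and $\lambda_{\abs\lambda}<\dmax(\mu)$ — since $\dmax$ is a global maximum over $\mu$ and can jump when the front of $\mu$ changes; handling the edge cases where $\mu$ becomes empty or $\lambda$ becomes empty will require the same care as the $\zeta$-empty versus $\zeta$-nonempty split in Theorem~\ref{part1}. Once the fixed points are identified as the pairs $(\eps,\mu)$ with $\lir(\mu)$ even, the stated identity follows immediately by cancellation.
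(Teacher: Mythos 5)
There is a genuine gap here, and it is worth noting that the paper's own proof is not an involution argument at all but an equivalence-class (telescoping) argument, precisely because the local rules you propose break down. For your first route, the involution $\phi$ of Definition~\ref{phi-def} does not restrict to the subset of $\PIC_n$ encoding $\Vdash$-pairs: the case of $\phi$ that peels a prefix off $\mu$ is invoked exactly when $\lir(\mu)$ is \emph{odd}, and then the longest weakly decreasing prefix can be long and can even have repeated parts; the remark after Lemma~\ref{split}, which you invoke to force a single letter, applies only when $\lir(\mu)$ is \emph{even}, i.e.\ in the other case of $\phi$. Concretely, $(\eps,2218)\Vdash 13$ (since $\mu_1=2\le\dmax(2218)=5$), but $\phi(\emptyset;2218)=(22;18)$, whose list contains the two-part partition $22$ with equal parts, so the image is not an encoded $\Vdash$-pair; moreover $\phi$ pays no attention to the $\dmax$ constraints, so there is no reason it should preserve them.

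For your direct involution, besides the slip that adjoining $\mu_1-1$ or prepending $\lambda_k+1$ changes the total sum away from $n$, the essential problem is that the push/pop decision cannot be read off from the parity of $\lir(\mu)$. Two counterexamples: $(2,218)\Vdash 13$ has $\lir(218)=1$ odd, but adjoining $\mu_1=2$ to $\lambda=(2)$ violates distinctness, and the pair is not one of your declared fixed points, so your map is not even well defined there; and $(3,116)\Vdash 11$ has $\lir(116)=1$ odd, pushing gives $(13,16)$, where $\lir(16)=2$ is even, so your rule then removes the \emph{largest} part $3$ and returns $(1,316)\neq(3,116)$ --- not an involution. The paper avoids both problems by generating the equivalence relation $(\lambda a,\mu)\sim(\lambda,a\mu)$, observing that moves in the direction $\lambda\to\mu$ always stay inside the set of $\Vdash$-pairs (the opposite direction does not), so that each class is a chain $(\mu_1\cdots\mu_k,\,\mu_{k+1}\cdots\mu_m)$ for $0\le k\le\ell$, and then proving that the largest valid $\ell$ has the same parity as $\lir$ of the \emph{representative} composition $\lambda\mu$; the alternating sum over a class is then $1$ or $0$ accordingly. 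Your intended involution would amount to pairing neighbours in each chain starting from its top element, which cannot be specified by a rule depending only on $\lir(\mu)$ and emptiness of $\lambda$; once you pass to the chain/representative picture to fix this, you are essentially reproducing the paper's proof.
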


\begin{proof}
  Let $\sim$ be the equivalence relation generated by postulating that
  \[(\lambda a, \mu) \sim (\lambda, a \mu)
  \]
  whenever both $(\lambda a, \mu)\Vdash n$ and
  $(\lambda, a \mu)\Vdash n$ hold.  For example, when $n = 5$ the
  equivalence classes are all singletons except the class
  $\{(\epsilon, 113), (1, 13)\}$. For $n = 6$ there are three
  non-singleton classes, namely
  \[\{(\epsilon, 1131), (1, 131)\},\,
    \{(\epsilon, 114), (1, 14)\}\,\text{ and }\{(\epsilon, 123), (1, 23)\}.
  \]

  We wish to show that the inner sum in the expression for $R(x)$ above
  when restricted to a single equivalence class is $0$ or $1$.  In other
  words, if $C$ is an equivalence class, then
  \[
    \sum_{(\lambda, \mu)\,\in\, C} (-1)^{\abs{\lambda}} \in  \{0,1\}.
  \]
  Assume $(\lambda a, \mu)\Vdash n$. Then $a < \dmax(\mu)$, but
  $\dmax(a\mu) \geq \dmax(\mu) - 1$, so $a \leq \dmax(a\mu)$.
  Furthermore, if $\lambda$ is nonempty, then $\lambda_{\abs{\lambda}} < a$
  because $\lambda a$ is strictly increasing. Thus $\lambda_{\abs{\lambda}}
  < a \leq \dmax(a\mu)$, and so $(\lambda, a\mu)\Vdash n$. By induction
  on the number of elements moved
  we see that $(\lambda, \mu)$ is in the same equivalence class as
  $(\epsilon, \lambda\mu)$. Clearly we cannot have two pairs of the form
  $(\epsilon, \mu)$ in the same equivalence class, so we make them
  our representatives.

  Let $(\epsilon, \mu)$ be such a representative. We will call $k$ \emph{valid} if
  \[(\mu_1\dots\mu_k,\, \mu_{k+1}\dots\mu_{\abs{\mu}}) \,\Vdash\, n.
  \]
  Let $\lambda=\mu_1\dots\mu_k$ and $\nu=\mu_{k+1}\dots\mu_{\abs{\mu}}$.
  By the argument above, if some $k$ is valid, then all smaller $k$ are
  valid too. We want to find the largest valid $k$.  The sign of
  $(\mu_1\dots\mu_k, \mu_{k+1}\dots\mu_{\abs{\mu}})$ is $(-1)^k$, so if
  the largest valid value is $\ell$, then the sum of the equivalence
  class of $(\epsilon, \mu)$ is $(-1)^0 + (-1)^1 + \dots + (-1)^\ell$,
  which is zero if $\ell$ is odd and $1$ if $\ell$ is even.

  Let $s = \lir(\mu)$. We wish
  to show that $\ell$ and $s$ have the same parity. Clearly, $\ell \leq s$ since
  otherwise $\lambda=\mu_1\dots\mu_{\ell}$ would not be strictly increasing. If $\ell = s$ we have nothing left
  to prove, so we can assume that $\ell < s$. Then $\nu$ is nonempty and
  $\nu_1 \leq \dmax(\nu)$, so $\abs{\nu} \geq 2$ and $\ell \leq s - 2$. Let $k = s - 2$.
  Then $\lambda=\mu_1\dots\mu_k$ is strictly increasing and $\nu_1 < \nu_2$. Thus $\dmax(\nu)
  \geq \nu_2 - 1 \geq \nu_1$. If $\lambda$ is nonempty, then $\lambda_k <  \nu_1 \leq \dmax(\nu)$.
  Thus $k = s - 2$ is valid, so $\ell = s - 2$, which has
  the same parity as $s$.
  The representatives $(\epsilon, \mu)$ whose equivalence classes have sum one are
  hence exactly those where $\lir(\mu)$ is even.
\end{proof}

Theorem~\ref{S0-thm} follows directly from Lemma~\ref{S0-lemma}
and Theorems~\ref{fixed-points},~\ref{part1}~and~\ref{part2}.

\section{Structure of $R(x)$}

By Theorem~\ref{fixed-points} the elements of $\Fix(\phi)\cap\PIC_n$ are
of the form $(\emptyset; \mu)$ with $\mu\vDash n$ and $\lir(\mu)$
even. With this in mind let
\[\Fix_n(\phi) = \{\mu\vDash n:\text{$\lir(\mu)$ even}\}.
\]
Let $\M_n$ be the set of compositions of $n$ that start with an ascent
and are weakly decreasing after the initial ascent and let $M(x)$ be the
corresponding generating function. That is,
$(\mu_1, \dots, \mu_k) \in \M_n$ if and only if $k\geq 2$,
$\mu_1<\mu_2\geq\mu_3\geq\dots\geq\mu_k$ and
$\mu_1 + \dots + \mu_k = n$. For instance, $\M_n=\emptyset$ for
$n\leq 2$, $\M_3=\{12\}$, $\M_4=\{121,13\}$,
$\M_5=\{1211,122,131,14,23\}$ and the first few terms of the
power series $M(x)$ are
\[
  M(x) =
  x^3 + 2x^4 + 5x^5 + 8x^6 + 15x^7 + 23x^8 + 37x^9 + \cdots
\]
Let $\mu^1, \mu^2, \dots, \mu^k$ be compositions with
$\mu^i\in\M_{n_i}$. Their concatenation
\[
  \mu=\mu^1\mu^2\cdots\mu^k
\]
is a composition of $n=n_1+\cdots+n_k$ with $\lir(\mu)$ even, and so
$\mu\in\Fix_n(\phi)$.

Conversely, given a composition $\mu\in\Fix_n(\phi)$, let $\mu^1$ be
the longest prefix  of $\mu$ that belongs to $\M_{n_1}$, where $n_1$ is the
length of $\mu^1$. Writing $\mu=\mu^1\nu$ we can recursively do the
same with $\nu$, stopping if $\nu$ is empty. This way we arrive at a
factorisation $\mu=\mu^1\mu^2\cdots\mu^k$ with $\mu^i\in\M_{n_i}$ and
$n=n_1+\cdots+n_k$. For instance, the factors of $123511211\in \Fix_{17}(\phi)$
are $12$, $351$ and $1211$.

In terms of generating functions the factorisation we have established
translates to the functional equation $R(x) = (1 - M(x))^{-1}$.
Now, by \eqref{R-def},
\begin{align*}
  M(x) &= 1 + \biggl(\frac{x}{1-x} - 1\biggr)\Par(x).
\end{align*}
Thus, aside from the constant term, the coefficient of $x^n$ in $M(x)$ equals
\begin{equation}\label{M-coeffs}
  p(0)+p(1)+\dots+p(n-1)-p(n)
\end{equation}
and coincides with sequence A058884 in the OEIS~\cite{oeis}. Moreover,
\eqref{M-coeffs} is the number of compositions with exactly one
inversion according to Theorem~4.1 of~\cite{heubach-al-inv-comp}.
To summarise we have established the following proposition.
\begin{proposition}\label{R-prop}
  With $p(n)$ denoting the number of partitions of $n$,
  \[
    R(x) = \Bigl(1-\sum_{n\geq 1} \bigl(p(1)+p(2)+\cdots+p(n-1)-p(n) \bigr)x^n \Bigr)^{-1}.
  \]
\end{proposition}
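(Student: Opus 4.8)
The plan is to obtain the proposition as a short corollary of the functional equation $R(x) = (1 - M(x))^{-1}$, which the block factorisation $\mu = \mu^1\mu^2\cdots\mu^k$ of a composition $\mu\in\Fix_n(\phi)$ into pieces $\mu^i\in\M_{n_i}$ already supplies, and which in turn rests on Theorem~\ref{fixed-points} together with the identification of $\Fix(\phi)\cap\PIC_n$ with the set of compositions $\mu\vDash n$ for which $\lir(\mu)$ is even. Granting that equation, all that remains is to write $M(x)$ explicitly and then invert a single power series.

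First I would pin down $M(x)$. The quickest route uses the two descriptions of $R$: on one hand \eqref{R-def} gives $R(x) = \Comp(x)\Par(x)^{-1} = \tfrac{1-x}{1-2x}\,\Par(x)^{-1}$, and on the other $R(x) = (1-M(x))^{-1}$; equating these and using $\tfrac{1-2x}{1-x} = 1 - \tfrac{x}{1-x}$ rearranges to $M(x) = 1 + \bigl(\tfrac{x}{1-x} - 1\bigr)\Par(x)$. Alternatively, one can read $M(x)$ directly off the description of $\M_n$: a composition in $\M_n$ consists of a part $a\ge 1$ followed by a nonempty partition $\lambda$ whose largest part exceeds $a$; without that restriction the pairs $(a,\lambda)$ have generating function $\tfrac{x}{1-x}\bigl(\Par(x)-1\bigr)$, while the pairs one must discard, namely those in which $a$ is at least every part of $\lambda$ (so that prepending $a$ to $\lambda$ again yields a partition), biject with the partitions having at least two parts, counted by $\Par(x)-1-\tfrac{x}{1-x}$; subtracting and simplifying reproduces $M(x) = 1 + \bigl(\tfrac{x}{1-x}-1\bigr)\Par(x)$.

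Then I would extract coefficients: with $\tfrac{x}{1-x}-1 = -1 + \sum_{m\ge1}x^m$ and $\Par(x) = \sum_{j\ge0}p(j)x^j$, the coefficient of $x^n$ in $\bigl(\tfrac{x}{1-x}-1\bigr)\Par(x)$ is $\sum_{j=0}^{n-1}p(j)-p(n)$, and the leading $+1$ alters only the constant term, which vanishes; hence for $n\ge 1$ the coefficient of $x^n$ in $M(x)$ is exactly \eqref{M-coeffs}. Since $R(x) = (1-M(x))^{-1}$ gives $R(x)^{-1} = 1 - M(x)$, substituting these coefficients yields the stated identity. There is no genuine obstacle here, as the substance all lies in Theorem~\ref{fixed-points} and the block factorisation upstream; the one point requiring care is the bookkeeping of low-degree terms, namely that the stray $+1$ in the formula for $M$ exactly cancels the degree-zero contribution $-p(0)$ and that $\M_n = \emptyset$ for $n\le 2$, so that the sum on the right really does begin at $n = 1$.
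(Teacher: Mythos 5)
Your proposal is correct in substance and is essentially the paper's own argument: the block factorisation of compositions with even leftmost increasing run gives $R(x)=(1-M(x))^{-1}$, the formula $M(x)=1+\bigl(\tfrac{x}{1-x}-1\bigr)\Par(x)$ is obtained by combining this with \eqref{R-def}, and the proposition follows by coefficient extraction; your direct combinatorial derivation of $M(x)$ from the description of $\M_n$ (a first part $a$ followed by a nonempty partition with largest part exceeding $a$) is a small bonus that the paper does not spell out but is easily checked. One caveat, which you inherit from the paper rather than introduce: the coefficient of $x^n$ in $M(x)$ that you correctly compute is $p(0)+p(1)+\cdots+p(n-1)-p(n)$, exactly as in \eqref{M-coeffs}, and this does \emph{not} literally match the displayed proposition, whose sum starts at $p(1)$; for instance the true coefficient of $x^3$ in $R(x)^{-1}$ is $-1$, whereas the displayed formula gives $0$. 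So your computation is right, but the closing claim that ``substituting these coefficients yields the stated identity'' glosses over an off-by-$p(0)$ slip in the statement itself: as written, each coefficient in the proposition is short by $p(0)=1$, and the identity your argument actually proves (and the one consistent with \eqref{M-coeffs} and with OEIS A058884) is $R(x)=\bigl(1-\sum_{n\geq 1}(p(0)+p(1)+\cdots+p(n-1)-p(n))x^n\bigr)^{-1}$.
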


An alternative formula can be obtained from considering the logarithmic
derivative of $R(x)$:

\begin{proposition}\label{exp-formula}
  With $\sumdiv(n)$ denoting the sum of the divisors of $n$,
  \[
    R(x) = \exp\left(\sum_{n\geq 1}\bigl(2^n-\sumdiv(n)-1\bigr)\, \frac{x^n}{n}\right).
  \]
\end{proposition}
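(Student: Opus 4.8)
The plan is to compute $\log R(x)$ directly from the product form and then recognise the resulting Lambert-type series. By Equation~\eqref{R-def} together with the definitions $\Comp(x) = (1-x)/(1-2x)$ and $\Par(x) = \prod_{k\geq 1}(1-x^k)^{-1}$, we have $R(x) = \Comp(x)\Par(x)^{-1} = \frac{1-x}{1-2x}\prod_{k\geq 1}(1-x^k)$, so that
\[
  \log R(x) = \log(1-x) - \log(1-2x) + \sum_{k\geq 1}\log(1-x^k).
\]
Each term on the right is a formal power series with vanishing constant term, so these manipulations and the final exponentiation are legitimate in $\QQ[[x]]$.

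Next I would expand each logarithm via $\log(1-t) = -\sum_{n\geq 1} t^n/n$. The first two terms contribute $-\sum_{n\geq 1}\frac{x^n}{n} + \sum_{n\geq 1}\frac{2^n x^n}{n} = \sum_{n\geq 1}(2^n-1)\frac{x^n}{n}$. For the last term I would write $\sum_{k\geq 1}\log(1-x^k) = -\sum_{k\geq 1}\sum_{m\geq 1}\frac{x^{km}}{m}$ and collect the coefficient of $x^n$: it equals $-\sum_{km=n}\frac1m = -\frac1n\sum_{km=n}k = -\frac{\sumdiv(n)}{n}$, since as $(k,m)$ runs over the factorisations of $n$ the first coordinates $k$ run exactly over the divisors of $n$. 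Adding the three contributions gives $\log R(x) = \sum_{n\geq 1}\bigl(2^n-\sumdiv(n)-1\bigr)\frac{x^n}{n}$, and exponentiating both sides yields the stated identity.

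There is no serious obstacle here; the only step calling for a moment's care is justifying the interchange of the double summation (immediate, since for each fixed $n$ only finitely many pairs $(k,m)$ contribute) and the identification $\frac1n\sum_{km=n}k = \sumdiv(n)/n$, i.e.\ the standard power-series expansion of the Lambert series $\sum_{k\geq 1}\log(1-x^k)$. One could instead start from Proposition~\ref{R-prop}, but passing through the product form is by far the most direct route.
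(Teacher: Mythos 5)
Your proof is correct and takes essentially the same route as the paper: both start from the product form $R(x)=\frac{1-x}{1-2x}\prod_{k\geq 1}(1-x^k)$ coming from \eqref{R-def} and compute $\log R(x)$, identifying the divisor sum $\sumdiv(n)$ in the contribution of the partition factor. The only cosmetic difference is that the paper first takes the logarithmic derivative, recognises the Lambert series $\sum_{k\geq 1} kx^k/(1-x^k)$, and then integrates, whereas you expand the logarithms directly; the underlying computation is identical.
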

\begin{proof}
  Taking the logarithmic derivative of \eqref{R-def} we get
  \begin{align*}
    x\bigl(\log R(x)\bigr)'
    &= \frac{x\Comp'(x)}{\Comp(x)} - \frac{x\Par'(x)}{\Par(x)} \\
    &= \frac{x}{(1-x)(1-2x)} - \sum_{k \geq 1} \frac{kx^k}{1 - x^k}
  \end{align*}
  An expression of the form $F(x)=\sum_{k \geq 1}a_kx^k/(1 - x^k)$ is called
  a Lambert series, and it is well known, and easy to see, that
  \[F(x)=\sum_{n \geq 1}b_nx^n,\,\text{ where }\, b_n=\sum_{k|n}a_k.
  \]
  In our case $a_k=k$ and hence $b_n= \sumdiv(n)$. Consequently,
  \begin{equation}\label{logarithmic-pointing}
    x\bigl(\log R(x)\bigr)' = \sum_{n \geq 1} (2^n - 1 - \sumdiv(n)) x^n
  \end{equation}
  and it follows that
  \begin{align*}
    \log R(x)
    &= \int_0^x \sum_{n \geq 1} (2^n - 1 - \sumdiv(n)) t^{n-1} dt \\
    &= \sum_{n\geq 1}\bigl(2^n-1-\sumdiv(n)\bigr)\frac{x^n}{n},
  \end{align*}
  which proves the claim.
\end{proof}

\begin{corollary}
  The cardinalities $r_n=\abs{\PIC_n}$ can be computed recursively by
  $r_0=1$ and, for $n\geq 1$,
  \[r_n = \frac{1}{n} \sum_{k = 1}^n r_{n-k} (2^k - \sumdiv(k) - 1).
  \]
  Moreover, we have the closed formula
  \[r_n =
    \frac{1}{n!} \sum_{\pi \in \Sym(n)} \prod_{\ell \in C(\pi)} (2^\ell - \sumdiv(\ell) - 1),
  \]
  where $\Sym(n)$ is the symmetric group of degree $n$ and $C(\pi)$ is a multiset that
  encodes the cycle type of $\pi$, that is, there is an $\ell \in C(\pi)$ for
each $\ell$-cycle of $\pi$.
\end{corollary}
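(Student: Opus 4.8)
The plan is to obtain both formulas directly from identities already in hand, using only that $r_n$ is the coefficient of $x^n$ in $R(x)$ and that $R(0)=1$ gives the base case $r_0=1$.

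For the recursion I would start from equation~\eqref{logarithmic-pointing}, which reads $x\bigl(\log R(x)\bigr)'=\sum_{n\geq 1}(2^n-\sumdiv(n)-1)x^n$. Multiplying both sides by $R(x)$ turns the left-hand side into $xR'(x)$, so
\[
  xR'(x)=R(x)\sum_{n\geq 1}\bigl(2^n-\sumdiv(n)-1\bigr)x^n.
\]
Comparing the coefficient of $x^n$ on the two sides gives $n\,r_n=\sum_{k=1}^n\bigl(2^k-\sumdiv(k)-1\bigr)r_{n-k}$, which is the asserted recursion after dividing by $n$.

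For the closed formula I would write $a_\ell=2^\ell-\sumdiv(\ell)-1$ and expand the exponential of Proposition~\ref{exp-formula} as a product over cycle lengths:
\[
  R(x)=\exp\Bigl(\sum_{\ell\geq 1}\frac{a_\ell}{\ell}x^\ell\Bigr)
       =\prod_{\ell\geq 1}\sum_{m\geq 0}\frac{a_\ell^{\,m}}{\ell^{\,m}\,m!}\,x^{\ell m}.
\]
Collecting the coefficient of $x^n$ yields
\[
  r_n=\sum_{(m_\ell)}\ \prod_{\ell\geq 1}\frac{a_\ell^{\,m_\ell}}{\ell^{\,m_\ell}\,m_\ell!},
\]
the sum ranging over all sequences $(m_\ell)_{\ell\geq 1}$ of nonnegative integers with $\sum_\ell \ell\,m_\ell=n$. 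The last step is to invoke the classical count of permutations by cycle type: the number of $\pi\in\Sym(n)$ with exactly $m_\ell$ cycles of length $\ell$ for every $\ell$ is $n!\big/\prod_\ell\bigl(\ell^{\,m_\ell}m_\ell!\bigr)$. Inserting a factor $n!/n!$ rewrites the displayed sum over cycle types as a sum over the symmetric group, namely $\frac{1}{n!}\sum_{\pi\in\Sym(n)}\prod_{\ell\in C(\pi)}a_\ell$, which is the claim.

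I do not expect a genuine obstacle. The recursion is a one-line coefficient comparison, and the closed formula is nothing but an instance of the exponential formula. The only place that calls for a little care is the bookkeeping in the last paragraph: justifying the expansion of the infinite product as a formal power series (only finitely many of its factors affect any fixed coefficient) and matching the weight $1/\prod_\ell(\ell^{\,m_\ell}m_\ell!)$ to the cycle-type count. Both are entirely standard, e.g.\ via the exponential formula in~\cite{stanley}.
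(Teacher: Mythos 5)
Your proof of the recursion is exactly the paper's argument: multiply \eqref{logarithmic-pointing} by $R(x)$ to get $xR'(x)=R(x)\sum_{n\geq 1}(2^n-\sumdiv(n)-1)x^n$ and compare coefficients of $x^n$. For the closed formula, however, you and the paper part ways: the paper simply cites Equation~(8) of \cite{scoreseq} (and the preceding paragraph), whereas you give a self-contained derivation by expanding $\exp\bigl(\sum_{\ell\geq 1}a_\ell x^\ell/\ell\bigr)$ with $a_\ell=2^\ell-\sumdiv(\ell)-1$ as a product over $\ell$, extracting the coefficient of $x^n$ as a sum over cycle types $(m_\ell)$ with weight $\prod_\ell a_\ell^{m_\ell}/(\ell^{m_\ell}m_\ell!)$, and matching that weight with the classical count $n!/\prod_\ell(\ell^{m_\ell}m_\ell!)$ of permutations of a given cycle type. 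This is correct (formal convergence is unproblematic since only finitely many factors contribute to any fixed power of $x$, and Proposition~\ref{exp-formula} supplies the needed exponential form), and it buys a proof that is independent of the external reference, at the modest cost of redoing a standard instance of the exponential formula that the paper preferred to outsource. One small point of care: as in the paper, $r_n$ here should be read as the coefficient of $x^n$ in $R(x)$, i.e.\ the signed count of $\PIC_n$ (equivalently the number of fixed points of $\phi$), which is how you use it.
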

\begin{proof}
  Since $\bigl(\log R(x)\bigr)' = R'(x)/R(x)$ it follows from
  \eqref{logarithmic-pointing} that
  \[xR'(x) = R(x)\sum_{n \geq 1} (2^n - 1 - \sumdiv(n)) x^n
  \]
  and on identifying coefficients we get the claimed recursion. For the
  closed formula we refer to Equation~(8) in \cite{scoreseq} and the
  paragraph preceding that formula.
\end{proof}

It easy to see that the coefficient of $x^n$ in $xM'(x)/(1 - M(x))$ is
$2^n - \sumdiv(n) - 1$. Thus, if we consider the factorisation
$\mu=\mu^1\mu^2\cdots\mu^k$ of elements in $\PIC$, as above, together
with a distinguished site of $\mu^1$, then such structures should be
counted by $2^n - \sumdiv(n) - 1$. Finding a bijective proof of this
remains an open problem.


\bibliographystyle{abbrv}
\bibliography{few-inversions.bib}

\end{document}